\documentclass[11pt,reqno]{amsproc}

\usepackage[a4paper,margin=1in]{geometry}
\usepackage{amsmath,amssymb,amsthm,mathtools}
\usepackage{enumitem}
\usepackage{hyperref}
\usepackage{mathrsfs}
\usepackage{xcolor}

\hypersetup{
  colorlinks=true,
  linkcolor=blue!60!black,
  citecolor=blue!60!black,
  urlcolor=blue!60!black
}

\newtheorem{theorem}{Theorem}[section]
\newtheorem{proposition}{Proposition}[section]
\newtheorem{lemma}{Lemma}[section]
\newtheorem{corollary}{Corollary}[section]

\newtheorem{definition}{Definition}[section]

\newcommand{\R}{\mathbb R}
\newcommand{\Om}{\Omega}

\newcommand{\one}{\mathbf 1}
\newcommand{\dist}{\operatorname{dist}}
\newcommand{\supp}{\operatorname{supp}}
\newcommand{\dd}{\,d}
\newcommand{\Lam}{\Lambda}

\newcommand{\rev}[1]{#1}

\title[Weak solutions for inviscid SQG]
{Weak Solutions for Inviscid SQG\\
with Lorentz Data}
\date{}

\author{Peter Constantin}
\address{Department of Mathematics, Princeton University, Princeton, NJ 08544}
\email{const@math.princeton.edu}

\author{Mihaela Ignatova}
\address{Department of Mathematics, Temple University, Philadelphia, PA 19122}
\email{ignatova@temple.edu}

\author{Quoc-Hung Nguyen}
\address{Academy of Mathematics and Systems Science - Chinese Academy of Sciences}
\email{qhnguyen@amss.ac.cn} 

\subjclass[2020]{35Q35, 35D30, 35B35, 46E30}
\keywords{surface quasi-geostrophic equation, weak solutions, Hamiltonian
conservation, Lorentz spaces, endpoint compactness, bounded domains}
\begin{document}

\begin{abstract}
We construct global weak solutions of the inviscid surface quasi-geostrophic
equation in $\mathbb R^2$ and in smooth bounded domains, for arbitrary initial
data in the critical Lorentz space $L^{4/3,2}$.  The solutions conserve the
Hamiltonian $\|\Lambda^{-1/2}\theta(t)\|_{L^2}^2$ for all times.  The second Lorentz exponent
is determined by the sharp boundedness
$\Lambda^{-1/2}:L^{4/3,2}\to L^2$; the corresponding estimate fails for
$L^{4/3,q}$ when $q>2$.  We use an approximation scheme that is tailored for Lorentz spaces. It smooths the
advecting velocity and the initial data, and preserves order in distribution functions.   Removing the approximation is made possible by 
uniform bounds for the Lorentz norms of high amplitude cutoffs of the solutions.

\end{abstract}

\maketitle

\section{Introduction and main result}\label{sec:introduction}

The inviscid surface quasi-geostrophic (SQG) equation is an active scalar model
in which a transported scalar determines the incompressible velocity through a
singular integral of order zero.  It arose as a model for front formation in
geophysical flow and has long served as a two-dimensional analogue of the
three-dimensional incompressible Euler equations 
\cite{Constantin1994, ConstantinMajdaTabak1994}.  The finite time blow up problem from smooth and localized initial data remains a challenging 
open problem.

 In either the plane or in a smooth bounded
domain, the equation takes the form
\[
  \partial_t\theta+u\cdot\nabla\theta=0,
  \qquad u=\nabla^\perp\Lambda^{-1}\theta.
\]
where $\Lambda = (-\Delta)^{\frac{1}{2}}$.  
For smooth solutions, divergence-free transport preserves the distribution
function of $\theta$, and hence every finite $L^p$ norm.  It also
preserves the Hamiltonian
\[
  \mathcal H(\theta)
  =\frac12\int_\Om\theta\Lambda^{-1}\theta\dd x
  =\frac12\|\Lambda^{-1/2}\theta\|_{L^2(\Om)}^2,
\]
because $u=\nabla^\perp\Lambda^{-1}\theta$ is pointwise orthogonal to
$\nabla\Lambda^{-1}\theta$.

Both statements may break down for weak solutions.  The velocity and scalar
may be too rough for the product $u\theta$ to have a direct meaning, while weak
convergence in the natural negative Sobolev space need not preserve the
quadratic Hamiltonian.  Global weak solutions in $L^2$ were first obtained in 
the thesis \cite{Resnick1995}. This was extended to
$L^p$-based settings in
\cite{Marchand2008}; for the equation with homogeneous Dirichlet boundary condition in bounded
domains, global $L^2$ weak solutions and their Hamiltonian conservation were
established in \cite{ConstantinNguyen2018}.  At lower regularity,
convex integration constructions showed that weak SQG solutions need not be
unique \cite{BuckmasterShkollerVicol2019,IsettMa2021}.  More recently, 
in \cite{BrueNguyenJin2026} the authors obtained flexibility and nonuniqueness for weak
solutions on $\mathbb T^2$ with
$\theta\in C([0,1];L^{\bar p})$, where
$\bar p=4/3+10^{-5}$ .  This result is complementary
to the present theorem, which constructs a Hamiltonian-conserving solution for
every initial datum at the Lorentz endpoint.

Two recent results on $\mathbb T^2$ are closely related to this problem.  In \cite{DeRosaLatoccaPark2025, {DeRosaYuzbasioglu2026}} the authors 
constructed Hamiltonian-conserving solutions
for initial data in $L^{4/3}(\mathbb T^2)$ and in Sobolev spaces by vanishing viscosity methods. 

 In this paper we replace $L^{4/3}$ by the larger Lorentz
space $L^{4/3,2}$, consider both the plane and the Dirichlet boundary condition equations on
bounded domains, and regularize the velocity advection law without adding viscosity.
Since $L^{4/3}=L^{4/3,4/3}\subsetneq L^{4/3,2}$, the Lorentz endpoint allows
rougher data than the Lebesgue space.

The natural Lorentz space for the Hamiltonian is given by the
two-dimensional bound
\[
  \|\Lambda^{-1/2}f\|_{L^2}
  \lesssim \|f\|_{L^{4/3,2}},
\]
and the secondary Lorentz exponent $2$ is optimal: the estimate fails with
$L^{4/3,q}$ for every $q>2$.  Thus $L^{4/3,2}$ is the largest Lorentz space at
the fixed principal exponent $4/3$ for which this estimate controls the
Hamiltonian.  We prove the estimate using $L^{p,q}$ convolution inequalities
\cite{ONeil1963}.

Bounded sequences in $L^{4/3,2}$ may concentrate, and weak convergence alone
does not preserve the Hamiltonian.  For our approximations, however, all
amplitude distributions come from heat regularizations of the initial datum.
We use the high amplitude cutoff modulus
$\tau_M(f)=\|(|f|-M)_+\|_{L^{4/3,2}}$.  It is contracted by the heat semigroup,
preserved by measure-preserving transport, and weakly lower semicontinuous.
In the plane, we use also a transport estimate to obtain spatial tightness.

We formulate the notion of weak solution by symmetrizing the nonlinearity. This is an idea that was used before for two dimensional
incompressible Euler equations in vorticity formulation  \cite{Delort1991, Schochet1995}. Like SQG, the 2D vorticity equation is an active scalar with quadratic nonlinearity, 
but it is smoother than SQG whose $\nabla^{\perp}\theta$ is the analogue of the  3D Euler vorticity, The 2D Biot-Savart kernel has a singularity of order 1 near 
the diagonal, and  symmetrization paired with a test function produces a bounded kernel.  For SQG, the kernel has a singularity of order 2, 
\(K_{\rm SQG}(z)=c z^\perp/|z|^3\), and symmetrization paired with a test function produces  a kernel of size
\(|x-y|^{-1}\).  Smallness near
the diagonal is obtained from the $L^{4/3,2}$ high amplitude cutoff  modulus estimate.  In the whole
space, the transport estimate prevents travel to spatial infinity.  

\subsection{The equation and functional setting}\label{subsec:setting}

Let either $\Om=\R^2$ or let $\Om\subset\R^2$ be a bounded smooth domain.
Throughout the paper, implicit constants may depend on the fixed domain
$\Om$ and on explicitly displayed test functions, but not on time or on the
approximation parameters unless stated otherwise.  We set
\[
  \Lam=(-\Delta)^{1/2}.
\]
On $\R^2$ this is the Fourier multiplier, $\Lam = |D|$.  On a bounded domain we use 
spectral calculus of the Laplacian with homogeneous Dirichlet boundary conditions.  Thus
\begin{equation}\label{eq:heat-representation}
  \Lam^{-1} f=(-\Delta)^{-1/2}f
  = c_0\int_0^\infty s^{-1/2}e^{s\Delta}f\,\dd s
\end{equation}
where $e^{s\Delta}$ is the Dirichlet heat semigroup, i.e.,
$w(s,x)=e^{s\Delta}f(x)$ is the solution of
\begin{equation}
	\begin{cases}
		\partial_s w-\Delta w=0,
		& s>0,\quad x\in\Om,\\
		w=0,
		& s >0,\quad x\in\partial\Om,\\
		w(0,x)=f(x),
		& x\in\Om.
	\end{cases}
\end{equation}
We consider the inviscid SQG equation
\begin{equation}\label{eq:sqg}
  \partial_t\theta+u\cdot\nabla\theta=0,
  \qquad
  u=\nabla^\perp\Lam^{-1}\theta .
\end{equation}
The corresponding Hamiltonian energy is
\begin{equation}\label{eq:hminus-energy}
  \|\theta\|_{\dot H^{-1/2}(\Om)}^2
  :=\int_\Om \theta\,\Lam^{-1}\theta\dd x
  =\|\Lam^{-1/2}\theta\|_{L^2(\Om)}^2.
\end{equation}
For a measurable function $f$ we denote its distribution function by
\[
  d_f(\lambda)=|\{x\in\Om:\ |f(x)|>\lambda\}|,
  \qquad \lambda>0,
\]
where $|E|$ is the Lebesgue measure of $E$.
We recall that, for $0<p<\infty$ and $0<q<\infty$,
\begin{equation}\label{eq:lorentz-distribution}
  \|f\|_{L^{p,q}}^q
  \sim
  \int_0^\infty \lambda^{q-1}d_f(\lambda)^{q/p}\dd\lambda .
\end{equation}
In particular,
\begin{equation}\label{eq:l432-distribution}
  \|f\|_{L^{4/3,2}}^2
  \sim
  \int_0^\infty \lambda\,d_f(\lambda)^{3/2}\dd\lambda .
\end{equation}

Let $f^*$ be the decreasing rearrangement of $|f|$ and
$f^{**}(s)=s^{-1}\int_0^s f^*(r)\dd r$.  Following
\cite[Chapter~2]{BennettSharpley1988}, we use on
$X:=L^{4/3,2}(\Om)$ also the rearrangement norm
\begin{equation}\label{eq:fully-symmetric-lorentz-norm}
 \|f\|_X=
 \left(\int_0^{|\Om|}
 [s^{3/4}f^{**}(s)]^2\frac{\dd s}{s}\right)^{1/2},
\end{equation}
where the upper endpoint is infinity when $|\Om|=\infty$.  For $M\ge0$
define the high amplitude cutoff modulus
\begin{equation}\label{eq:tail-modulus}
  \tau_M(f):=\bigl\|(|f|-M)_+\bigr\|_X.
\end{equation}
Because $X$ has order-continuous norm, $\tau_M(f)\to0$ as $M\to\infty$ for
every $f\in X$.  Moreover,
\begin{equation}\label{eq:superlevel-by-modulus}
  \|f\one_{\{|f|>2M\}}\|_X\le2\tau_M(f).
\end{equation}
The functional $\tau_M$ is convex and norm-continuous, and is therefore weakly
lower semicontinuous.

Let $G_1(x,y)$ denote the kernel of $\Lambda^{-1}$ and set
$K_\Om(x,y)=\nabla_x^\perp G_1(x,y)$.  In the whole-space case this reduces to
$K_{\R^2}(x,y)=c(x-y)^\perp/|x-y|^3$.  For a spatial test function $\varphi$,
define the symmetrized transport form
\begin{equation}\label{eq:intro-transport-form}
  \mathcal N_\Om(\theta,\varphi)
  :=\frac12\iint_{\Om\times\Om}
  \left[K_\Om(x,y)\cdot\nabla\varphi(x)
  +K_\Om(y,x)\cdot\nabla\varphi(y)\right]
  \theta(x)\theta(y)\dd x\dd y.
\end{equation}
The cancellation displayed in Section~\ref{sec:symmetrized} makes this form
finite for $\theta\in L^{4/3,2}$.

\begin{definition}[Weak solution]\label{def:weak-solution}
Let $\theta_0\in L^{4/3,2}(\Om)$.  A function
\[
  \theta\in L^\infty_{\mathrm{loc}}
  ([0,\infty);L^{4/3,2}(\Om))
  \cap C([0,\infty);\mathcal D'(\Om))
\]
is a weak solution of \eqref{eq:sqg} with initial datum $\theta_0$ if, for every
$\varphi\in C_c^\infty([0,\infty)\times\Om)$,
\begin{equation}\label{eq:weak-solution-definition}
  \int_0^\infty\!\int_\Om
  \theta\,\partial_t\varphi\dd x\dd t
  +\int_\Om\theta_0(x)\varphi(0,x)\dd x
  +\int_0^\infty\mathcal N_\Om(\theta(t),\varphi(t))\dd t=0.
\end{equation}
\end{definition}

\subsection{Main theorem}\label{subsec:main-theorem}

\begin{theorem}[Weak solutions at critical Lorentz regularity]\label{thm:main}
Let $\theta_0\in L^{4/3,2}(\Om)$.
Then there exists a global weak solution $\theta$ of \eqref{eq:sqg} such that
\[
  \theta\in C([0,\infty);X_{\mathrm w})
  \cap L^\infty_{\mathrm{loc}}([0,\infty);X).
\]
Moreover, for every $M\ge0$ and every $t\ge0$,
\begin{equation}\label{eq:tail-bound}
  \tau_M(\theta(t))\le\tau_M(\theta_0).
\end{equation}
There also exists a nonnegative nondecreasing function
$\Phi=\Phi_{\theta_0}$, depending only on the initial datum, such that
\[
  \Phi(\lambda)\longrightarrow\infty
  \qquad\text{as }\lambda\longrightarrow\infty,
  \qquad
  1+\Phi(2\lambda)\le2\bigl(1+\Phi(\lambda)\bigr),
\]
and, for every $t\ge0$,
\begin{equation}\label{eq:weighted-distribution-bound}
  \int_0^\infty
  \lambda\bigl(1+\Phi(\lambda)\bigr)
  d_{\theta(t)}(\lambda)^{3/2}\dd\lambda
  \lesssim
  \int_0^\infty \lambda d_{\theta_0}(\lambda)^{3/2}\dd\lambda
  \simeq \|\theta_0\|_X^2.
\end{equation}
Finally, the negative Sobolev energy is conserved:
\begin{equation}\label{eq:hminus-conservation}
  \|\theta(t)\|_{\dot H^{-1/2}(\Om)}
  =
  \|\theta_0\|_{\dot H^{-1/2}(\Om)}
  \qquad\text{for every }t\ge0 .
\end{equation}
\end{theorem}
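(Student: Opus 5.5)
We will construct $\theta$ as a limit of approximate solutions that transport exactly, so that rearrangement-invariant quantities are preserved. Fix $\epsilon>0$ and set $\theta_0^\epsilon=e^{\epsilon\Delta}\theta_0$ (Dirichlet heat semigroup on $\Om$, Gauss kernel on $\R^2$, possibly followed by a smooth cutoff in the plane). We then solve the regularized transport problem
\[
\partial_t\theta^\epsilon+u^\epsilon\cdot\nabla\theta^\epsilon=0,\qquad
u^\epsilon=\nabla^\perp\Lam^{-1}e^{\epsilon\Delta}\theta^\epsilon .
\]
Here $u^\epsilon$ is smooth, divergence free and tangent to $\partial\Om$. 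Because $e^{\epsilon\Delta}$ maps $L^{4/3,2}$ into $W^{k,\infty}$ with $\epsilon$-dependent bounds, a Picard/Lagrangian argument yields a global smooth solution transported by a measure-preserving flow. Consequently $d_{\theta^\epsilon(t)}=d_{\theta^\epsilon_0}$ for all $t$, and $\tau_M(\theta^\epsilon(t))=\tau_M(\theta_0^\epsilon)\le\tau_M(\theta_0)$. The last inequality holds because the heat semigroup is a positive contraction on $L^1$ and $L^\infty$; we have $(|e^{\epsilon\Delta}f|-M)_+\le e^{\epsilon\Delta}(|f|-M)_+$, and $X$ is an exact interpolation (fully symmetric) space. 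We also aim to show that distribution functions are ordered, $d_{\theta_0^\epsilon}\prec d_{\theta_0}$, in the sense of Hardy–Littlewood majorization, which gives $\theta_0^{\epsilon\,**}\le\theta_0^{**}$.

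The weight $\Phi$ in \eqref{eq:weighted-distribution-bound} comes from a de la Vallée-Poussin type argument. Since $\int_\Lambda^\infty\lambda d_{\theta_0}^{3/2}d\lambda\to0$, we can choose a slowly growing doubling $\Phi$ with $\int\lambda(1+\Phi)d_{\theta_0}^{3/2}\lesssim\|\theta_0\|_X^2$. The majorization then transfers this bound to $\theta^\epsilon_0$, and hence to $\theta^\epsilon(t)$, with a constant absorbed using the doubling property. We expect the same to hold for the limit by lower semicontinuity of distribution functions under weak convergence, via the convex functional $\tau_M$ integrated against $d\Phi$.

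For compactness, the family $\theta^\epsilon$ is bounded in $L^\infty_tX$. We will show that $\partial_t\theta^\epsilon$ is bounded in $L^\infty_t$ of a negative Sobolev space by using the symmetrized form: for smooth $\varphi$, the regularized nonlinearity equals $\mathcal N_\Om^\epsilon(\theta^\epsilon,\varphi)$, whose kernel is bounded by $C_\varphi|x-y|^{-1}$ uniformly in $\epsilon$. By Hardy–Littlewood–Sobolev in Lorentz form (O'Neil), this is controlled by $\|\theta\|_{L^{4/3,2}}^2$. On bounded domains this step needs the boundary commutator structure of $K_\Om$ from Section~\ref{sec:symmetrized}. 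Arzelà–Ascoli then gives $\theta^\epsilon\to\theta$ in $C([0,T];X_{\mathrm w})$. The bound \eqref{eq:tail-bound} passes to the limit by weak lower semicontinuity of $\tau_M$.

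The main obstacle is passing to the limit in $\mathcal N$ and in the Hamiltonian, both of which are quadratic under only weak convergence. Our plan is to split $\theta^\epsilon=\theta^\epsilon\one_{\{|\theta^\epsilon|\le 2M\}}+\theta^\epsilon\one_{\{|\theta^\epsilon|>2M\}}$, and similarly to cut off $|x-y|<\delta$.
\begin{itemize}
\item The high-amplitude part contributes at most $C\tau_M(\theta_0)\|\theta_0\|_X$ by \eqref{eq:superlevel-by-modulus} and the $|x-y|^{-1}$ bound, uniformly in $\epsilon$.
\item For the bounded part on the near-diagonal region, $|x-y|^{-1}\one_{|x-y|<\delta}$ has small $L^1$ norm.
\item The remaining kernel is bounded and continuous, so weak convergence together with compactness in $H^{-s}_{\mathrm{loc}}$ (from the time-derivative bound, via Aubin–Lions) yields convergence.
\end{itemize}
In the plane we add tightness, since mass cannot escape to infinity. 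For this we use a transport estimate $\frac{d}{dt}\int\theta_\epsilon^{\,\cdot}\chi_R$ computed with the symmetrized form, which shows the far-field contribution is small uniformly in $t\le T$.

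Energy conservation follows the same route. Along the approximation, $\int\theta^\epsilon\Lam^{-1}e^{\epsilon\Delta}\theta^\epsilon$ is exactly conserved by orthogonality. Since $\Lam^{-1/2}:L^{4/3,2}\to L^2$ is bounded, the amplitude splitting with $\tau_M$ and spatial tightness upgrade weak convergence to strong convergence of $\Lam^{-1/2}\theta^\epsilon(t)$ in $L^2$ at each $t$. Convergence of the initial energies, $\|\theta_0^\epsilon\|_{\dot H^{-1/2}}\to\|\theta_0\|_{\dot H^{-1/2}}$, then gives \eqref{eq:hminus-conservation}.
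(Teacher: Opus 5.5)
The overall architecture matches the paper's. You transport exactly by a smoothed self-adjoint velocity law, get uniform control of $\tau_M$, use the symmetrized $|x-y|^{-1}$ form for time compactness, and split near and far from the diagonal for both quadratic forms. Your regularization $\Lam^{-1}e^{\epsilon\Delta}$ in place of the truncated integral $\Lam_{\epsilon_1}^{-1}$ is harmless.

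\textbf{The genuine gap is spatial tightness in $\R^2$.} You need it for \eqref{eq:hminus-conservation}. Translates $\theta_0(\cdot-x_n)$ with $|x_n|\to\infty$ converge weakly to $0$ while keeping their Hamiltonian, so you must show $\sup_{\epsilon,\,t\le T}\|\theta^\epsilon(t)\one_{B_R^c}\|_X\to0$. Only then is the far-field part of $\iint G\,\theta^\epsilon\theta^\epsilon$ small.

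The mechanism you propose cannot give this. The symmetrized form only controls $\frac{d}{dt}\int\theta^\epsilon\chi_R$. That is a signed linear functional, and it says nothing about the size of $|\theta^\epsilon|$ outside $B_R$ in $X$. For a cutoff equal to $1$ near infinity it is not even defined for general data in $L^{4/3,2}$. For a nonlinear renormalization $\beta(\theta^\epsilon)$, the flux $\int\beta(\theta^\epsilon)\,u^\epsilon\cdot\nabla\chi_R$ is not symmetric in its two factors, so no symmetrization is available.

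The missing ingredient is a uniform Lorentz bound on the velocity itself, $\sup_{\epsilon,t}\|u^\epsilon(t)\|_{L^{4/3,2}}\le U$. In your setting this is immediate, since $u^\epsilon=R^\perp e^{\epsilon\Delta}\theta^\epsilon$ and Riesz transforms are bounded on $L^{4/3,2}$. Feed it into a Lagrangian displacement estimate for bounded-amplitude level sets. For $E=\{|a|\le R_0,\ |\theta_0^\epsilon(a)|\le2M,\ |X^\epsilon(t,a)|>R\}$, measure preservation and Lorentz duality give $(R-R_0)|E|\le CTU|E|^{1/4}$. Hence
$\|\theta^\epsilon(t)\one_{B_R^c}\|_X\le\alpha(R_0)+2\tau_M(\theta_0)+CMTU/(R-R_0)$,
where $\alpha(R_0)$ is the uniform spatial tail of the compact family $\{e^{s\Delta}\theta_0\}_{0\le s\le1}$. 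Choosing $M$, then $R_0$, then $R$ gives tightness; this is Lemma~\ref{lem:whole-space-tightness}.

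\textbf{Two smaller corrections.} First, apply the far-diagonal convergence to the untruncated $\theta^\epsilon$, not to $\theta^\epsilon\one_{\{|\theta^\epsilon|\le2M\}}$. The truncations carry no time-derivative bound, and their weak limits need not be $\theta\one_{\{|\theta|\le2M\}}$. The amplitude split should only make the near-diagonal form uniformly small, for $\theta^\epsilon$ and, through \eqref{eq:tail-bound}, for $\theta$, as in Lemma~\ref{lem:critical-quadratic-compactness}. On a bounded domain, $H^{-s}_{\rm loc}$ compactness must be supplemented by the boundary-layer bound $\|f\one_{E_\rho}\|_X\le CM|E_\rho|^{3/4}+\tau_M(\theta_0)$, with $E_\rho$ the $\rho$-neighbourhood of $\partial\Om$. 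Alternatively, use continuity of the kernels up to $\partial\Om$.

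Second, distribution functions are not weakly lower semicontinuous: indicators of thin stripes of density $\tfrac12$ in a square converge weakly to $\tfrac12$ on the square. Transferring the weighted bound through majorization would also require an unproved Hardy inequality for the Orlicz--Lorentz functional. Neither is needed. One has
$\int_{2M}^\infty\lambda\, d_h(\lambda)^{3/2}\,d\lambda\lesssim\tau_M(h)^2$ and $\tau_M(\theta_0)^2\lesssim\int_M^\infty\lambda\, d_{\theta_0}(\lambda)^{3/2}\,d\lambda$.
So your de la Vall\'ee-Poussin weight passes to $\theta(t)$ directly from \eqref{eq:tail-bound} and the doubling property, which is essentially Proposition~\ref{prop:weighted-tail-envelope}.
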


\subsection{Main ideas and organization}\label{subsec:strategy}

We truncate the heat semigroup representation of $\Lambda^{-1}$ and smooth
the initial datum.  The resulting approximate equation is then a transport equation
with a smooth divergence-free velocity. It has global smooth solutions. This approximation was used in \cite{Ignatova2019}
to construct global regular solutions of critical dissipative SQG in bounded domains. In the present case without dissipation
 the distribution functions of the solutions are preserved in time, and the self-adjointness of the regularized operator gives an exact
Hamiltonian identity. 

The main difficulty is to prevent concentration on the diagonal.  One of the
key compactness ingredients is given by the existence of the uniform factor $\Phi$ 
of Proposition~\ref{prop:weighted-tail-envelope} which, together with the 
high amplitude cutoff modulus $\tau_M$ allows passage to the limit.

Section~\ref{sec:lorentz-tail} contains the endpoint Lorentz and high amplitude cutoff 
modulus estimates.  Section~\ref{sec:approximation} constructs the conservative
approximation and establishes its uniform bounds and Hamiltonian identity.
The symmetrized formulation in the plane and on bounded domains is developed
in Section~\ref{sec:symmetrized}.  Section~\ref{sec:compactness-proof} contains
the compactness argument and the proof of Theorem~\ref{thm:main}, while
Appendix~\ref{app:sharpness} proves the optimality of the secondary Lorentz
exponent.

\section{Critical Lorentz and high amplitude cutoff estimates}\label{sec:lorentz-tail}

We first prove the Lorentz estimate for the Hamiltonian and then collect the
high amplitude cutoff modulus estimates used in the compactness argument.  The relevance of the pair
$(4/3,2)$ can already be read from the kernel: in two dimensions the operator
o $\Lambda^{-1/2}$ has kernel of size $|x|^{-3/2}$, which belongs to weak
$L^{4/3}$.  An $L^{p,q}$ convolution inequality then places the image in
$L^{2,2}=L^2$ when the secondary exponent of the datum is at most $2$.

\begin{proposition}[Endpoint Lorentz control]\label{prop:critical-lorentz}
Let either $\Om=\R^2$ or let $\Om\subset\R^2$ be a bounded smooth domain, and
define
\[
  \mathcal I_{\Om,1/2}f
  :=
  \begin{cases}
    |D|^{-1/2}f,&\Om=\R^2,\\
    \Lam^{-1/2}f,&\Om\text{ bounded}.
  \end{cases}
\]
Then
\begin{equation}\label{eq:critical-lorentz}
  \|\mathcal I_{\Om,1/2}f\|_{L^2(\Om)}
  \lesssim
  \|f\|_{L^{4/3,2}(\Om)}
  \lesssim
  \left(
  \int_0^\infty \lambda d_f(\lambda)^{3/2}\dd\lambda
  \right)^{1/2}.
\end{equation}
Furthermore, on $\R^2$ the estimate
\[
  \| |D|^{-1/2}f\|_{L^2}\lesssim \|f\|_{L^{4/3,q}}
\]
fails in general for every $q>2$.
\end{proposition}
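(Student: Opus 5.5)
The proof splits into three parts: the whole-space estimate, the transfer to bounded domains, and the counterexample showing that $q>2$ fails.

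\textbf{Whole space.} On $\R^2$ we have $|D|^{-1/2}f=c\,|x|^{-3/2}*f$. The kernel $|x|^{-3/2}$ lies in $L^{4,\infty}$, since its distribution function is $d(\lambda)=\pi\lambda^{-4/3}$; so it belongs to $L^{4/3,\infty}$. O'Neil's convolution inequality \cite{ONeil1963} gives
\[
\|g*f\|_{L^{r,s}}\lesssim\|g\|_{L^{p_1,q_1}}\|f\|_{L^{p_2,q_2}},
\qquad \frac1r+1=\frac1{p_1}+\frac1{p_2},\qquad \frac1s\le\frac1{q_1}+\frac1{q_2}.
\]
Take $p_1=4/3$, $q_1=\infty$, $p_2=4/3$, $q_2=2$. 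Then $1/r=3/4+3/4-1=1/2$, so $r=2$, and we may take $s=2$. This yields $\||D|^{-1/2}f\|_{L^2}\lesssim\|f\|_{L^{4/3,2}}$.

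The second inequality in \eqref{eq:critical-lorentz} is just \eqref{eq:l432-distribution}, i.e. the equivalence \eqref{eq:lorentz-distribution} between the rearrangement and distribution forms of the quasinorm. The rearrangement norm \eqref{eq:fully-symmetric-lorentz-norm} built from $f^{**}$ is equivalent to the $f^*$ version by Hardy's inequality, since $p=4/3>1$.

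\textbf{Bounded domain.} I would use the heat representation
\[
\Lam^{-1/2}f=c\int_0^\infty s^{-3/4}e^{s\Delta}f\,\dd s.
\]
The Dirichlet heat kernel satisfies the pointwise Gaussian domination $0\le p^\Om_s(x,y)\le(4\pi s)^{-1}e^{-|x-y|^2/4s}$ (maximum principle). Integrating in $s$ gives
\[
|\Lam^{-1/2}f(x)|\le C\int_\Om|x-y|^{-3/2}|f(y)|\dd y .
\]
Extending $|f|$ by zero to $\R^2$ preserves its Lorentz quasinorm, so the whole-space bound applies. Positivity of the kernels is what makes this transfer legitimate.

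\textbf{Failure for $q>2$.} This is the main obstacle. By duality (or directly), I would test on a lacunary sum of bumps:
\[
f=\sum_{k=1}^N a_k\,2^{3k/2}\,\phi(2^k x),
\]
with $\phi\ge0$ a fixed bump, so that $\|2^{3k/2}\phi(2^k\cdot)\|_{L^{4/3}}\simeq1$. Positive summands give pointwise lower bounds: since all terms are nonnegative and $|x|^{-3/2}*\cdot$ is a positive operator, the cross terms in $\||D|^{-1/2}f\|_{L^2}^2$ are nonnegative. Hence
\[
\||D|^{-1/2}f\|_{L^2}^2\gtrsim\sum_k a_k^2
\]
by scale invariance, because each piece has $\dot H^{-1/2}$ norm $\simeq a_k$.

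On the other hand, $f^*$ is essentially the step function equal to $a_k2^{3k/2}$ on sets of measure $\simeq2^{-2k}$. This requires arranging $a_k2^{3k/2}$ to be increasing, which holds for the choice below, and using disjoint-ish supports; taking the bumps at well-separated centers makes this clean. It follows that
\[
\|f\|_{L^{4/3,q}}^q\simeq\sum_k a_k^q .
\]
Choosing $a_k=k^{-1/2}$ gives $\sum a_k^2\sim\log N\to\infty$, while $\sum a_k^q$ stays bounded for $q>2$. This contradicts the estimate.

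The delicate points are the lower bound on the $L^2$ norm once the supports are separated, which needs nonnegativity of the kernel rather than orthogonality, and the rearrangement computation. Both become clean with separated centers at distance $\gg1$ and bumps of radius $2^{-k}\le1/2$.
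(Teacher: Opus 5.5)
Your proof is correct, but the sharpness part takes a different route from the paper's.

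\textbf{Positive estimate.} This part matches the paper: O'Neil's inequality for $c|x|^{-3/2}*f$, then Gaussian domination of the Dirichlet heat kernel inside $\Lam^{-1/2}=c\int_0^\infty s^{-3/4}e^{s\Delta}\,ds$ and extension by zero. One slip: $|x|^{-3/2}$ lies in $L^{4/3,\infty}$, not $L^{4,\infty}$, as your own distribution function $\pi\lambda^{-4/3}$ shows.

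\textbf{Sharpness, the paper's route.} The paper uses a single explicit radial profile. For $2<q<\infty$ this is $\one_{\{|x|<r_0\}}|x|^{-3/2}L(|x|)^{-1/q}(\log L(|x|))^{-a}$ with $a>1/q$; for $q=\infty$ it is the pure power $\one_{\{|x|<1/100\}}|x|^{-3/2}$. The paper computes the Lorentz norm from the radial rearrangement and bounds $I_{1/2}f(x)\gtrsim|x|^{-1}L^{-1/q}(\log L)^{-a}$ pointwise, using the annulus $2|x|<|y|<4|x|$.

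\textbf{Sharpness, your route.} You discretize the same mechanism. Each separated dyadic bump contributes $\simeq a_k$ to both norms by scale invariance. Positivity of the kernel of $|D|^{-1}$ makes the $\dot H^{-1/2}$ norm superadditive, so it is bounded below by the $\ell^2$ norm of $(a_k)$. Lacunary supports and increasing heights bound the $L^{4/3,q}$ norm above by the $\ell^q$ norm of $(a_k)$.

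\textbf{What each approach buys.} Your version needs no pointwise estimate of the potential. It also shows directly that the exponent $2$ comes from an $\ell^2$ versus $\ell^q$ comparison across scales. The single choice $a_k=k^{-1/2}$ treats all $2<q\le\infty$ at once, whereas the paper's profile depends on $q$.

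As written, however, your finite sums only show that no uniform constant exists. The paper's example gives the stronger conclusion that $|D|^{-1/2}$ does not map $L^{4/3,q}$ into $L^2$ at all. You recover this by taking $N=\infty$. The same rearrangement bound puts the infinite series in $L^{4/3,q}$ for every $q>2$. Monotone convergence together with $\sum_k k^{-1}=\infty$ then gives $|D|^{-1/2}f\notin L^2$.
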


\begin{proof}
We first consider $\Omega=\mathbb R^2$.  The Riesz-potential representation is
\[
 |D|^{-1/2}f=I_{1/2}f=c_2|x|^{-3/2}*f.
\]
Since
\[
 \bigl|\{x:|x|^{-3/2}>\lambda\}\bigr|\sim\lambda^{-4/3},
\]
the kernel $|x|^{-3/2}$ belongs to $L^{4/3,\infty}(\mathbb R^2)$.
The convolution inequality \cite{ONeil1963}
\[
\|f*g\|_{L^{r,c}} \le \|f\|_{L^{p, a}}\|g\|_{L^{q,b}},
\]
where $r^{-1} = p^{-1}+ q^{-1} - 1$, for $p,q\in (1,\infty)$,  $c^{-1} = a^{-1}+ b^{-1}$, for $a,b\in [1,\infty]$,
therefore gives 
\[
 \bigl\||D|^{-1/2}f\bigr\|_{L^2}
 \lesssim\|f\|_{L^{4/3,2}},
\]
because $L^{4/3,\infty}*L^{4/3,2}\subset L^{2,2}=L^2$.  The second
inequality in \eqref{eq:critical-lorentz} follows from
\eqref{eq:l432-distribution}.
	
For a bounded smooth domain, the same mapping property follows from heat-kernel
domination.  The parabolic maximum principle gives the comparison
\[
  0\leq H_D(\rho,x,y)
  \leq(4\pi\rho)^{-1}
  \exp\left(-\frac{|x-y|^2}{4\rho}\right)
\]
(see, for example,
\cite{ConstantinIgnatova2017,ConstantinNguyen2018}).  The fractional
representation
\[
  \Lambda^{-1/2}f
  =c\int_0^\infty e^{\rho\Delta}f\,\rho^{-3/4}\dd\rho
\]
therefore has an integral kernel bounded by
\[
 C\int_0^\infty \rho^{-7/4}
 \exp\left(-\frac{|x-y|^2}{4\rho}\right)\dd\rho
 =C|x-y|^{-3/2},
\]
uniformly for $x,y\in\Omega$.  Extending $f$ by zero outside $\Omega$ and applying the
preceding Lorentz convolution estimate gives
\[
  \|\Lambda^{-1/2}f\|_{L^2(\Omega)}
  \lesssim \|f\|_{L^{4/3,2}(\Omega)}.
\]
This proves the upper estimate in both geometries.  The failure for $q>2$ is
proved in Appendix~\ref{app:sharpness}.
\end{proof}

\subsection{High amplitude cutoff  estimate}\label{subsec:tail-estimate}

We next collect the properties of $\tau_M$ used below.

\begin{lemma}\label{lem:tail}
Let $X=L^{4/3,2}(\Om)$ be equipped with the norm
\eqref{eq:fully-symmetric-lorentz-norm}.  Then, for every $M\geq0$:
\begin{enumerate}[label=\textup{(\roman*)}]
\item $f\mapsto\tau_M(f)$ is convex and norm-continuous, hence weakly lower
semicontinuous on $X$;
\item if $T$ is a positive linear operator satisfying
\[
 \|Tg\|_{L^1(\Om)}\leq\|g\|_{L^1(\Om)},
 \qquad
 \|Tg\|_{L^\infty(\Om)}\leq\|g\|_{L^\infty(\Om)},
\]
then $\tau_M(Tf)\leq\tau_M(f)$;
\item if $Y$ is measure preserving, then
$\tau_M(f\circ Y)=\tau_M(f)$;
\item with
$T_Mf=\operatorname{sgn}(f)(|f|\wedge M)$ and
$R_Mf=f-T_Mf=\operatorname{sgn}(f)(|f|-M)_+$,
\begin{equation}\label{eq:tail-estimate}
 \|R_Mf\|_X=\tau_M(f),
 \qquad \|\Lam^{-1/2}R_Mf\|_{L^2}\lesssim\tau_M(f).
\end{equation}
Moreover, the restriction of $f$ to the superlevel set
$\{|f|>2M\}$ satisfies
\begin{equation}
 \|f\one_{\{|f|>2M\}}\|_X\leq2\tau_M(f),
 \qquad
 \|\Lam^{-1/2}(f\one_{\{|f|>2M\}})\|_{L^2}
 \lesssim\tau_M(f).
 \label{eq:hard-tail-by-modulus}
\end{equation}
\end{enumerate}
In particular, $\tau_M(f)\to0$ as $M\to\infty$ for every $f\in X$.
\end{lemma}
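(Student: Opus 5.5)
We plan to reduce every item to two structural facts about $X=L^{4/3,2}$ with the norm \eqref{eq:fully-symmetric-lorentz-norm}. First, $\|\cdot\|_X$ is a rearrangement-invariant norm that is monotone in $f^{**}$. Second, it is an exact interpolation space for the pair $(L^1,L^\infty)$: if $g^{**}\le f^{**}$ pointwise, then $\|g\|_X\le\|f\|_X$. This second fact is the Calder\'on--Mityagin property; in our setting it follows directly from the formula, since the norm only involves $f^{**}$.

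\emph{Item (i).} Convexity follows by composing two maps. The map $f\mapsto(|f|-M)_+$ is pointwise convex and nonnegative, because $s\mapsto(|s|-M)_+$ is convex. The norm $\|\cdot\|_X$ is a lattice norm, so it is monotone under $0\le g\le h$ pointwise. Hence
\[
\tau_M\bigl(\tfrac{f+g}{2}\bigr)\le\bigl\|\tfrac12(|f|-M)_++\tfrac12(|g|-M)_+\bigr\|_X\le\tfrac12\tau_M(f)+\tfrac12\tau_M(g).
\]
For continuity, $s\mapsto(|s|-M)_+$ is $1$-Lipschitz, so $|\tau_M(f)-\tau_M(g)|\le\|f-g\|_X$. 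A convex norm-continuous functional has closed convex sublevel sets. These are weakly closed by Mazur's lemma, so $\tau_M$ is weakly lower semicontinuous.

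\emph{Item (iii).} This is immediate, because $(|f\circ Y|-M)_+=(|f|-M)_+\circ Y$ has the same distribution function as $(|f|-M)_+$, and $\|\cdot\|_X$ depends only on the distribution function.

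\emph{Item (ii)} is the step needing care. The positivity of $T$ gives $|Tf|\le T|f|$. For $|f|\le M+(|f|-M)_+$, positivity then gives $|Tf|\le T(|f|\wedge M)+T(|f|-M)_+$. By the $L^\infty$ contraction, $T(|f|\wedge M)\le M$ almost everywhere, so
\[
(|Tf|-M)_+\le T\bigl((|f|-M)_+\bigr)\quad\text{pointwise}.
\]
Since $T$ is an $L^1$–$L^\infty$ contraction, the Calder\'on inequality gives $(Tg)^{**}\le g^{**}$. Applying this with $g=(|f|-M)_+$, the lattice monotonicity and the monotonicity in $f^{**}$ together give $\tau_M(Tf)\le\tau_M(f)$.

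\emph{Item (iv).} The equality $\|R_Mf\|_X=\tau_M(f)$ holds because $|R_Mf|=(|f|-M)_+$. The $L^2$ bound on $\Lam^{-1/2}R_Mf$ then follows from Proposition~\ref{prop:critical-lorentz}, using the equivalence of the rearrangement norm with the quasinorm \eqref{eq:l432-distribution}. For \eqref{eq:hard-tail-by-modulus}, note that on the set $\{|f|>2M\}$ we have $|f|-M\ge|f|/2$. Hence $|f|\one_{\{|f|>2M\}}\le2(|f|-M)_+$ pointwise, and lattice monotonicity gives the first bound. The second bound follows again from Proposition~\ref{prop:critical-lorentz}.

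\emph{Final claim.} We have $(|f|-M)_+\downarrow0$ pointwise as $M\to\infty$, and the family is dominated by $|f|$. Since $q=2<\infty$, the space $X$ has absolutely continuous norm, so dominated convergence in $X$ gives $\tau_M(f)\to0$. Concretely, $((|f|-M)_+)^{**}\le f^{**}$ and $((|f|-M)_+)^{**}\to0$ pointwise, so dominated convergence applies in the integral \eqref{eq:fully-symmetric-lorentz-norm}.

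The main obstacle we expect is in (ii). It is the justification of $(Tg)^{**}\le g^{**}$ for a general positive $L^1$–$L^\infty$ contraction. We would cite Bennett–Sharpley, Chapter~2 (Calder\'on's theorem), rather than reprove it.
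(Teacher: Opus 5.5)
Your proposal is correct and matches the paper's proof essentially step for step. It uses lattice monotonicity and the convex, $1$-Lipschitz map $s\mapsto(|s|-M)_+$ for (i) and equimeasurability for (iii). For (ii) it proves the pointwise bound $(|Tf|-M)_+\le T\bigl((|f|-M)_+\bigr)$ and then $(Th)^{**}\le h^{**}$, which is the paper's submajorization $Th\prec\!\prec h$. For (iv) it uses $|f|\one_{\{|f|>2M\}}\le2(|f|-M)_+$ together with Proposition~\ref{prop:critical-lorentz}.

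The differences are cosmetic. You get the pointwise bound in (ii) by splitting $|f|=(|f|\wedge M)+(|f|-M)_+$ and applying the $L^\infty$ contraction, where the paper uses Jensen's inequality. You prove $\tau_M(f)\to0$ by explicit dominated convergence in \eqref{eq:fully-symmetric-lorentz-norm}, where the paper cites order continuity.
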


\begin{proof}
The scalar function $\vartheta_M(s)=(|s|-M)_+$ is convex and $1$-Lipschitz.
In particular,
\[
 |\vartheta_M(f)-\vartheta_M(g)|\le |f-g|
 \quad\hbox{a.e.},
\]
so the lattice property gives
$|\tau_M(f)-\tau_M(g)|\le\|f-g\|_X$.  The same lattice property and the
pointwise convexity of $\vartheta_M$ make $\tau_M$ convex; hence it is
norm-continuous and
every continuous convex functional on a Banach space is weakly lower
semicontinuous.  Positivity gives $|Tf|\le T|f|$.  Since $T$ is positive and
$T\one\leq\one$, Jensen's inequality, with the missing mass placed at zero,
gives
\[
 (|Tf|-M)_+\le T\bigl((|f|-M)_+\bigr).
\]
Set $h=(|f|-M)_+$.  The $L^1$ and $L^\infty$ contraction assumptions imply
$Th\prec\!\prec h$.  Here $g\prec\!\prec h$ means that $g$ is
submajorized by $h$, that is,
\[
  \int_0^s g^*(r)\dd r
  \leq
  \int_0^s h^*(r)\dd r
  \qquad\text{for every }0<s<|\Om|,
\]
with the condition required for every $s>0$ when $|\Om|=\infty$.
Monotonicity of the norm
\eqref{eq:fully-symmetric-lorentz-norm} under submajorization therefore gives
\[
 \tau_M(Tf)\leq\|Th\|_X\leq\|h\|_X=\tau_M(f),
\]
which proves (ii).  Assertion (iii)
follows from equimeasurability.  The identity $|R_Mf|=(|f|-M)_+$ and
Proposition~\ref{prop:critical-lorentz} give \eqref{eq:tail-estimate}.  On
$\{|f|>2M\}$, the inequality $|f|\le2(|f|-M)_+$ and the lattice property
give the stated $X$ estimate.  Applying Proposition~\ref{prop:critical-lorentz}
to $f\one_{\{|f|>2M\}}$ gives the corresponding
$\dot H^{-1/2}$ estimate.
Order continuity of $X$ implies $\tau_M(f)\downarrow0$.
\end{proof}

\begin{proposition}
\label{prop:weighted-tail-envelope}
Let $f\in X=L^{4/3,2}(\Om)$ and define
\[
  \mathcal A_f
  :=\{h\in X:\ \tau_M(h)\le\tau_M(f)
       \text{ for every }M\ge0\}.
\]
Then there exists a nonnegative nondecreasing function
$\Phi_f:[0,\infty)\to[0,\infty)$ such that
\[
  \Phi_f(\lambda)\longrightarrow\infty,
  \qquad
  1+\Phi_f(2\lambda)
  \le2\bigl(1+\Phi_f(\lambda)\bigr).
\]
In particular, the weight $1+\Phi_f$ is doubling.  With
\[
  \mathcal M_{\Phi_f}(h)
  :=\int_0^\infty
  \lambda\bigl(1+\Phi_f(\lambda)\bigr)
  d_h(\lambda)^{3/2}\dd\lambda,
\]
it holds that
\begin{equation}\label{eq:weighted-tail-envelope}
  \sup_{h\in\mathcal A_f}\mathcal M_{\Phi_f}(h)
  \lesssim \mathcal M_{\Phi_f}(f)<\infty.
\end{equation}
Moreover, we normalize $\Phi_f$  so that
\begin{equation}\label{eq:weighted-tail-normalization}
  \mathcal M_{\Phi_f}(f)
  \le2\int_0^\infty \lambda d_f(\lambda)^{3/2}\dd\lambda.
\end{equation}
\end{proposition}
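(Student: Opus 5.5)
The plan is a de la Vallée Poussin type construction. We take $\Phi_f$ to be a step function whose jumps sit at a rapidly increasing sequence of amplitudes chosen from the decay of $\tau_M(f)$. The weighted integral $\mathcal M_{\Phi_f}(h)$ then splits into a sum of high-amplitude integrals of $h$, each of which is controlled by a cutoff modulus $\tau_{M_k}(h)\le\tau_{M_k}(f)$.

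\textbf{Step 1 (tails in terms of $\tau_M$).} For any $h\in X$ and $M\ge0$ we have $d_{(|h|-M)_+}(\lambda)=d_h(\lambda+M)$. So \eqref{eq:l432-distribution} gives
\[
 \tau_M(h)^2\simeq\int_0^\infty\lambda\,d_h(\lambda+M)^{3/2}\dd\lambda
 =\int_M^\infty(\lambda-M)\,d_h(\lambda)^{3/2}\dd\lambda .
\]
For $\lambda\ge2M$ we have $\lambda-M\ge\lambda/2$, hence
\[
 \int_{2M}^\infty\lambda\,d_h(\lambda)^{3/2}\dd\lambda\lesssim\tau_M(h)^2 ,
\]
with a constant depending only on the Lorentz norm equivalence. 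For $h\in\mathcal A_f$ this is $\lesssim\tau_M(f)^2$. The case $M=0$ gives $\int_0^\infty\lambda d_h^{3/2}\lesssim\|f\|_X^2$.

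\textbf{Step 2 (choice of the sequence).} The case $f=0$ is trivial: take $\Phi_f(\lambda)=\log_2(1+\lambda)$, say, and note that $\mathcal A_0=\{0\}$. Otherwise set $I(f)=\int_0^\infty\lambda d_f(\lambda)^{3/2}\dd\lambda\simeq\|f\|_X^2$, which is positive and finite. By Lemma~\ref{lem:tail}, $\tau_M(f)\to0$ as $M\to\infty$. By dominated convergence, $\int_{2M}^\infty\lambda d_f^{3/2}\to0$. We therefore choose inductively $0<M_1<M_2<\cdots$ with $M_{k+1}\ge2M_k$ such that
\[
 \tau_{M_k}(f)^2\le2^{-k}\|f\|_X^2,
 \qquad
 \int_{2M_k}^\infty\lambda\,d_f(\lambda)^{3/2}\dd\lambda\le2^{-k}I(f).
\]
Then define $\Phi_f(\lambda)=\#\{k:\ 2M_k\le\lambda\}$. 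This function is nonnegative, nondecreasing, and tends to $\infty$ because there are infinitely many $M_k$.

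\textbf{Step 3 (doubling and the bounds).} Since consecutive thresholds $2M_k$ differ by a factor of at least $2$, the interval $(\lambda,2\lambda]$ contains at most one jump. Hence $\Phi_f(2\lambda)\le\Phi_f(\lambda)+1$, which gives $1+\Phi_f(2\lambda)\le2(1+\Phi_f(\lambda))$. By Tonelli, for any $h$,
\[
 \mathcal M_{\Phi_f}(h)=\int_0^\infty\lambda d_h^{3/2}\dd\lambda+\sum_k\int_{2M_k}^\infty\lambda\,d_h(\lambda)^{3/2}\dd\lambda .
\]
For $h=f$, the second choice in Step 2 bounds the sum by $\sum_k 2^{-k}I(f)=I(f)$. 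This gives the normalization \eqref{eq:weighted-tail-normalization}, and in particular $\mathcal M_{\Phi_f}(f)<\infty$. For $h\in\mathcal A_f$, Step 1 and the first choice give
\[
 \mathcal M_{\Phi_f}(h)\lesssim\|f\|_X^2+\sum_k2^{-k}\|f\|_X^2\lesssim I(f)\le\mathcal M_{\Phi_f}(f),
\]
which is \eqref{eq:weighted-tail-envelope}.

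The only delicate point is Step 1: one must make sure the implicit constants are uniform in $M$ and in $h$. This holds because we only use the fixed equivalence \eqref{eq:l432-distribution} together with the elementary shift identity for distribution functions. Everything else is bookkeeping in the choice of $M_k$.
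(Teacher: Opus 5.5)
Your proposal is correct and follows essentially the paper's argument. Like the paper, you take a step function $\Phi_f$ whose jump levels grow at least geometrically, chosen so that the tails $\int_{R_k}^\infty\lambda\, d_h(\lambda)^{3/2}\dd\lambda$ are bounded by $2^{-k}$ times a fixed constant uniformly over $\mathcal A_f$ via $\tau_M$, and then apply Tonelli. The differences are cosmetic: you obtain the tail bound from the shift identity $d_{(|h|-M)_+}(\lambda)=d_h(\lambda+M)$ rather than from \eqref{eq:hard-tail-by-modulus}, and you impose a separate condition on $f$ to get the exact constant $2$, which the paper gets directly from $f\in\mathcal A_f$.
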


\begin{proof}
If $f=0$, then $\mathcal A_f=\{0\}$ and one may take
$\Phi_f(\lambda)=\lambda$, so assume $f\ne0$ and put
\[
  I_f:=\int_0^\infty \lambda d_f(\lambda)^{3/2}\dd\lambda>0.
\]
For $h\in\mathcal A_f$, the distribution-function representation of the
Lorentz norm and \eqref{eq:hard-tail-by-modulus} give
\begin{align*}
  \int_{2M}^\infty \lambda d_h(\lambda)^{3/2}\dd\lambda
  &\le
  \int_0^\infty
  \lambda d_{h\one_{\{|h|>2M\}}}(\lambda)^{3/2}\dd\lambda \\
  &\lesssim
  \|h\one_{\{|h|>2M\}}\|_X^2
  \lesssim \tau_M(f)^2.
\end{align*}
Since $\tau_M(f)\to0$ as $M\to\infty$, the preceding estimate yields
\[
  \sup_{h\in\mathcal A_f}
  \int_{2M}^\infty
  \lambda d_h(\lambda)^{3/2}\dd\lambda
  \lesssim \tau_M(f)^2
  \longrightarrow0.
\]
We may therefore choose an increasing sequence $R_k\to\infty$ with
$R_{k+1}\ge2R_k$ such that
\[
  \sup_{h\in\mathcal A_f}
  \int_{R_k}^\infty \lambda d_h(\lambda)^{3/2}\dd\lambda
  \le2^{-k}I_f,
  \qquad k\ge1.
\]
Set
\[
  \Phi_f(\lambda)
  :=\sum_{k=1}^\infty\one_{[R_k,\infty)}(\lambda).
\]
This function is nondecreasing and tends to infinity.  Because
$R_{k+1}\ge2R_k$, the interval $(\lambda,2\lambda]$ contains at most one new
level $R_k$; hence $1+\Phi_f$ satisfies the displayed doubling estimate.
\rev{Taking $M=0$ in the preceding tail estimate and using
$\tau_0(f)=\|f\|_X$ gives the uniform unweighted bound}
\[
 \rev{\sup_{h\in\mathcal A_f}
 \int_0^\infty\lambda d_h(\lambda)^{3/2}\dd\lambda
 \lesssim \|f\|_X^2\simeq I_f.}
\]
Finally, Tonelli's theorem yields, uniformly for $h\in\mathcal A_f$,
\[
  \int_0^\infty \lambda\Phi_f(\lambda)
  d_h(\lambda)^{3/2}\dd\lambda
  =\sum_{k=1}^\infty
  \int_{R_k}^\infty \lambda d_h(\lambda)^{3/2}\dd\lambda
  \le I_f.
\]
\rev{Together with the preceding unweighted estimate, this proves
\eqref{eq:weighted-tail-envelope}.  For $h=f$, the unweighted term equals
$I_f$, while the term containing $\Phi_f$ is at most $I_f$; hence
\eqref{eq:weighted-tail-normalization} follows with the stated constant.}
\end{proof}

Proposition~\ref{prop:weighted-tail-envelope} gives the weighted estimate in
Theorem~\ref{thm:main}.

\section{Conservative approximation and uniform estimates}
\label{sec:approximation}

We regularize the velocity law by truncating the heat representation of $\Lambda^{-1}$ and
we independently smooth the initial datum.  The first operation gives a smooth
velocity, while self-adjointness of the truncated operator preserves the
Hamiltonian cancellation.

\subsection{Approximation and preservation of distributions}

Let $\epsilon_1,\epsilon_2>0$ and define the regularized $\Lambda^{-1}$  operator
\begin{equation}\label{eq:regularized-lambda-inverse}
  \Lam_{\epsilon_1}^{-1}f
  :=c_0\int_{\epsilon_1}^\infty e^{\rho\Delta}f\,\rho^{-1/2}\dd\rho .
\end{equation}

 The operator $\Lam_{\epsilon_1}^{-1}$ is self-adjoint and
nonnegative.  As $\epsilon_1\downarrow0$, its quadratic form increases to that
of $\Lam^{-1}$ by monotone convergence in
\eqref{eq:regularized-lambda-inverse}.  The parameter $\epsilon_2$ is used only
to smooth the initial datum.

We solve
\begin{equation}\label{eq:approximation}
  \begin{cases}
  \partial_t\theta^{\epsilon_1,\epsilon_2}
  +u^{\epsilon_1,\epsilon_2}\cdot\nabla\theta^{\epsilon_1,\epsilon_2}=0,\\[2mm]
  u^{\epsilon_1,\epsilon_2}
  =\nabla^\perp\Lam_{\epsilon_1}^{-1}\theta^{\epsilon_1,\epsilon_2},\\[2mm]
  \theta^{\epsilon_1,\epsilon_2}(0)=e^{\epsilon_2\Delta}\theta_0.
  \end{cases}
\end{equation}
\begingroup
For fixed $\epsilon_1,
\epsilon_2>0$, the initial datum belongs to $X\cap L^\infty$ and is smooth.
On $\R^2$, if $K_{\epsilon_1}$ denotes the velocity kernel, then
\[
  K_{\epsilon_1}\in L^{4,2}(\R^2),
  \qquad
  \|\nabla K_{\epsilon_1}\|_{L^1(\R^2)}\leq C_{\epsilon_1}.
\]
Lorentz H\"older and Young inequalities consequently give
\[
 \|u^{\epsilon_1,\epsilon_2}(t)\|_{L^\infty}
 \leq C_{\epsilon_1}\|\theta^{\epsilon_1,\epsilon_2}(t)\|_X,
 \qquad
 \|\nabla u^{\epsilon_1,\epsilon_2}(t)\|_{L^\infty}
 \leq C_{\epsilon_1}
 \|\theta^{\epsilon_1,\epsilon_2}(t)\|_{L^\infty}.
\]
The corresponding bounded-domain estimates follow from the differentiated
Dirichlet heat-kernel representation.  Higher derivatives satisfy analogous
bounds, with constants depending on $\epsilon_1$ and the derivative order.
The velocity is therefore smooth, bounded, and divergence-free.  In a bounded
domain, $\Lam_{\epsilon_1}^{-1}\theta$ has zero Dirichlet trace, so
$u\cdot n=0$ on $\partial\Om$.  Conservation of the $X$ and $L^\infty$ norms
along characteristics then gives global continuation of the classical
solution.
\endgroup

More precisely, if
\[
  \frac{\dd}{\dd t}X^{\epsilon_1,\epsilon_2}(t,a)
  =u^{\epsilon_1,\epsilon_2}
  \bigl(t,X^{\epsilon_1,\epsilon_2}(t,a)\bigr),
  \qquad X^{\epsilon_1,\epsilon_2}(0,a)=a,
\]
then incompressibility gives
$\det D_aX^{\epsilon_1,\epsilon_2}(t,a)=1$ and
\[
  \theta^{\epsilon_1,\epsilon_2}
  \bigl(t,X^{\epsilon_1,\epsilon_2}(t,a)\bigr)
  =e^{\epsilon_2\Delta}\theta_0(a).
\]
Consequently, for every nonnegative Borel function
$\beta:\R\to[0,\infty]$,
\begin{equation}\label{eq:renormalized-approximation}
  \int_\Om \beta(\theta^{\epsilon_1,\epsilon_2}(t,x))\dd x
  =
  \int_\Om \beta(e^{\epsilon_2\Delta}\theta_0(x))\dd x.
\end{equation}
The same identity holds for a signed Borel function whenever
$\beta(e^{\epsilon_2\Delta}\theta_0)$ is integrable.
Equivalently,
\begin{equation}\label{eq:distribution-approximation}
  d_{\theta^{\epsilon_1,\epsilon_2}(t)}(\lambda)
  =d_{e^{\epsilon_2\Delta}\theta_0}(\lambda).
\end{equation}
Thus all amplitude information at positive times is inherited from the single
smoothed function $e^{\epsilon_2\Delta}\theta_0$; no new high-amplitude tail is
created by the approximate dynamics.  Lemma~\ref{lem:tail}, applied first to
the sub-Markov heat semigroup and then to the measure-preserving flow, gives
for every $M\ge0$
\begin{equation}\label{eq:uniform-tail}
  \sup_{\epsilon_1,\epsilon_2>0}\sup_{t\ge0}
  \tau_M(\theta^{\epsilon_1,\epsilon_2}(t))
  \le \tau_M(\theta_0).
\end{equation}
Taking $M=0$ gives
\begin{equation}\label{eq:uniform-lorentz}
  \sup_{\epsilon_1,\epsilon_2>0}\sup_{t\ge0}
  \|\theta^{\epsilon_1,\epsilon_2}(t)\|_X
  \le \|\theta_0\|_X,
\end{equation}
and hence, by Proposition~\ref{prop:critical-lorentz},
\begin{equation}\label{eq:uniform-hminus}
  \sup_{\epsilon_1,\epsilon_2>0}\sup_{t\ge0}
  \|\theta^{\epsilon_1,\epsilon_2}(t)\|_{\dot H^{-1/2}}
  \lesssim \|\theta_0\|_X.
\end{equation}
The constants in \eqref{eq:uniform-tail}--\eqref{eq:uniform-hminus} are
independent of time and of both regularization parameters.  This uniformity is
what permits a diagonal limit on every compact time interval.

\begin{lemma}\label{lem:whole-space-tightness}
Let $\Om=\R^2$ and  $0<\epsilon_1,\epsilon_2\le1$.  For every
$T>0$,
\begin{equation}\label{eq:whole-space-tightness}
 \lim_{R\to\infty}\ \sup_{0<\epsilon_1,\epsilon_2\le1}
 \sup_{0\le t\le T}
 \|\theta^{\epsilon_1,\epsilon_2}(t)\one_{\{|x|>R\}}\|_{L^{4/3,2}}=0.
\end{equation}
\end{lemma}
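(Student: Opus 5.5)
The plan is to reduce tightness to an $L^1$ mass estimate for a bounded, compactly supported piece of the data, and to control that piece with a transport (moment) estimate. The Lorentz norm of the remainder is handled by the contraction properties of Lemma~\ref{lem:tail}.

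\emph{Step 1 (reduction).} Fix $\eta>0$. Since $X$ has order-continuous norm, choose $g\in L^1\cap L^\infty(\R^2)$ with compact support in a ball $B_{R_0}$ and $\|\theta_0-g\|_X\le\eta$. The approximate equation is linear transport along the flow $X^{\epsilon_1,\epsilon_2}$. The flow is measure preserving, so
\[
 \theta^{\epsilon_1,\epsilon_2}(t)=G^{\epsilon_2}\circ Y_t+(e^{\epsilon_2\Delta}(\theta_0-g))\circ Y_t ,
 \qquad G^{\epsilon_2}=e^{\epsilon_2\Delta}g,
\]
where $Y_t$ is the inverse flow map. By Lemma~\ref{lem:tail}(ii),(iii) with $M=0$, the second term has $X$-norm at most $\eta$ uniformly in $t,\epsilon_1,\epsilon_2$. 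It therefore suffices to show that $\rho(t):=|G^{\epsilon_2}|\circ Y_t$ satisfies $\|\rho(t)\one_{\{|x|>R\}}\|_X\to0$ uniformly. Here $\rho(t)$ has the same distribution as $|G^{\epsilon_2}|$, so $\|\rho\|_{L^\infty}\le\|g\|_\infty$ and $\|\rho\|_{L^1}\le\|g\|_1$. The interpolation bound $\|w\|_{L^{4/3,2}}\lesssim\|w\|_{L^1}^{3/4}\|w\|_{L^\infty}^{1/4}$ then reduces everything to showing that $\int_{|x|>R}\rho(t,x)\,dx\to0$ uniformly.

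\emph{Step 2 (moment estimate).} Let $\psi_R(x)=\psi(x/R)$, where $\psi$ is smooth, vanishes on $B_{1/2}$, and equals $1$ outside $B_1$. Since $\rho$ is transported by the divergence-free field $u=u^{\epsilon_1,\epsilon_2}$,
\[
 \frac{d}{dt}\int\psi_R\rho\,dx=\int u\cdot\nabla\psi_R\,\rho\,dx\le\frac{C}{R}\,\|u(t)\|_{L^{4/3,2}}\|\rho(t)\|_{L^{4,2}} .
\]
The factor $\|\rho\|_{L^{4,2}}$ is bounded by $\|g\|_1$ and $\|g\|_\infty$. For the velocity, $u=\nabla^\perp\Lam_{\epsilon_1}^{-1}\theta$ is a Fourier multiplier of order zero applied to $\theta$, with symbol $i\xi^\perp|\xi|^{-1}m(\epsilon_1|\xi|^2)$ where $m$ is smooth and bounded. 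Mikhlin bounds uniform in $\epsilon_1$, combined with interpolation into Lorentz spaces, give $\|u\|_{L^{4/3,2}}\lesssim\|\theta\|_X\le\|\theta_0\|_X$ by \eqref{eq:uniform-lorentz}. Integrating in time gives
\[
 \int\psi_R\rho(t)\le\int\psi_R|G^{\epsilon_2}|+CT/R .
\]

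\emph{Step 3 (initial tail).} For $\epsilon_2\le1$, $|G^{\epsilon_2}|\le e^{\epsilon_2\Delta}|g|$. Because $g$ is supported in $B_{R_0}$ and the Gaussian tails decay, the mass of $e^{\epsilon_2\Delta}|g|$ outside $B_{R/2}$ tends to $0$ as $R\to\infty$, uniformly in $\epsilon_2\le1$. Combining Steps 1–3 yields $\limsup_R\sup\|\theta^{\epsilon_1,\epsilon_2}(t)\one_{\{|x|>R\}}\|_X\lesssim\eta$, and $\eta$ was arbitrary.

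The main obstacle I expect is the uniform-in-$\epsilon_1$ Lorentz bound for the regularized velocity in Step 2. I would verify it by writing $m(s)=c_0\int_{s}^\infty e^{-r}r^{-1/2}dr$, which is smooth with all derivatives bounded, so that the symbol satisfies Mikhlin conditions uniformly in $\epsilon_1$. Boundedness on $L^p$ for $p\in(1,\infty)$ then extends to $L^{4/3,2}$ by real interpolation.
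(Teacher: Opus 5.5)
Your proof is correct. Its key input is the same as the paper's: the $\epsilon_1$-uniform bound $\|u^{\epsilon_1,\epsilon_2}(t)\|_{L^{4/3,2}}\lesssim\|\theta_0\|_X$, paired through Lorentz--H\"older duality with a bounded, initially localized part of the data. The two arguments are organized differently.

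The paper works in Lagrangian variables and splits $f^{\epsilon_2}(a)\one_{\{|X^\epsilon(t,a)|>R\}}$ into three pieces:
\begin{itemize}
\item labels with $|a|>R_0$, controlled by $\alpha(R_0)$. This needs compactness of the heat orbit $\{e^{s\Delta}\theta_0\}_{0\le s\le1}$ in $X$ and a finite-net argument.
\item labels with $|f^{\epsilon_2}(a)|>2M$, controlled by $2\tau_M(\theta_0)$.
\item a bounded set $E$ of labels that must have travelled a distance $R-R_0$. Its measure is bounded by the displacement inequality $(R-R_0)|E|\le\int_0^t\int_{X^\epsilon(s,E)}|u^\epsilon|\le CTU|E|^{1/4}$.
\end{itemize}

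You instead approximate $\theta_0$ in $X$ by a bounded, compactly supported $g$ \emph{before} smoothing, and use linearity of transport along the fixed flow. The whole remainder is then disposed of by one application of Lemma~\ref{lem:tail}(ii),(iii) with $M=0$. Localization of $e^{\epsilon_2\Delta}g$ becomes an elementary Gaussian tail bound. The good part is handled by an Eulerian flux estimate for $\int\psi_R\rho$ combined with the interpolation $\|w\|_X\lesssim\|w\|_{L^1}^{3/4}\|w\|_{L^\infty}^{1/4}$. That interpolation is valid, since $\int_0^{\|w\|_\infty}\lambda\,(\|w\|_{L^1}/\lambda)^{3/2}\dd\lambda=2\|w\|_{L^1}^{3/2}\|w\|_\infty^{1/2}$.

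What each route buys: yours avoids the compactness argument behind $\alpha(R_0)$ and the high amplitude modulus entirely. The paper's gives the explicit bound \eqref{eq:tightness-quantitative}, expressed through the data-dependent moduli $\tau_M(\theta_0)$ and $\alpha(R_0)$. You should also take $0\le\psi\le1$, so that $\one_{\{|x|>R\}}\le\psi_R\le\one_{\{|x|\ge R/2\}}$; your Step 3 uses this implicitly.

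One correction concerns your verification of the velocity bound. The function $m(s)=c_0\int_s^\infty e^{-r}r^{-1/2}\dd r$ does not have bounded derivatives at $s=0$, since $m'(s)=-c_0e^{-s}s^{-1/2}$. Even bounded derivatives would not give $\epsilon_1$-uniform Mikhlin bounds, because $\partial_\xi[m(\epsilon_1|\xi|^2)]=2\epsilon_1\xi\, m'(\epsilon_1|\xi|^2)$. The condition you actually need, and which does hold, is $\sup_{s>0}s^j|m^{(j)}(s)|<\infty$ for $j\le2$. By the chain rule this gives $|\xi|^{|\alpha|}\,|\partial_\xi^\alpha[m(\epsilon_1|\xi|^2)]|\le C_\alpha$ uniformly in $\epsilon_1$. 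This is exactly the condition the paper records for $q_\epsilon$ when it proves \eqref{eq:uniform-velocity-lorentz}.
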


\begin{proof}
We write $X=L^{4/3,2}(\R^2)$ and abbreviate
$\theta^\epsilon=\theta^{\epsilon_1,\epsilon_2}$,
$u^\epsilon=u^{\epsilon_1,\epsilon_2}$, and
$f^{\epsilon_2}=e^{\epsilon_2\Delta}\theta_0$.  The regularized Riesz
transforms are uniformly bounded on $X$, so \eqref{eq:uniform-lorentz} gives
\begin{equation}\label{eq:uniform-velocity-lorentz}
 \sup_{\epsilon_1,\epsilon_2,t}
 \|u^{\epsilon_1,\epsilon_2}(t)\|_X\le U.
\end{equation}
Indeed, their multipliers are the Riesz multipliers times
$q_\epsilon(r)=c\int_{\epsilon r^2}^\infty e^{-s}s^{-1/2}\dd s$.  For
$j=0,1,2$, we have
\[
  \sup_{\epsilon>0}\sup_{r>0}
  r^j|\partial_r^jq_\epsilon(r)|\leq C_j.
\]
The Mikhlin multiplier theorem and the fact that $L^{p,q}$ is an interpolation space give \eqref{eq:uniform-velocity-lorentz}.
Strong continuity of the heat semigroup on $X$ makes
$\{e^{s\Delta}\theta_0:0\le s\le1\}$ compact in $X$.  Since $X$ has
order-continuous norm,
\begin{equation}\label{eq:initial-spatial-tail}
 \alpha(R_0):=\sup_{0\le s\le1}
 \|e^{s\Delta}\theta_0\one_{\{|x|>R_0\}}\|_X\longrightarrow0.
\end{equation}
Indeed, multiplication by $\one_{\{|x|>R_0\}}$ is uniformly bounded on $X$
and converges strongly to zero as $R_0\to\infty$; this convergence is uniform
on compact subsets of $X$ by a finite-net argument.
Let $X^\epsilon(t,a)$ be the measure-preserving flow.  Rearrangement
invariance and the flow formula yield
\[
 \|\theta^\epsilon(t)\one_{\{|x|>R\}}\|_X
 =\|f^{\epsilon_2}(a)
   \one_{\{|X^\epsilon(t,a)|>R\}}\|_X.
\]
We fix $M,R_0>0$ and split the last function according to
$|a|>R_0$, $|f^{\epsilon_2}(a)|>2M$, and the remaining set
\[
 E=\{a:|a|\le R_0,\ |f^{\epsilon_2}(a)|\le2M,
              |X^\epsilon(t,a)|>R\}.
\]
The first two pieces are bounded by $\alpha(R_0)$ and
$2\tau_M(\theta_0)$, respectively, using
\eqref{eq:uniform-tail}.  Now every $a\in E$ has moved a distance of at least
$R-R_0$.  Measure preservation, Lorentz H\"older, and
\eqref{eq:uniform-velocity-lorentz} give
\begin{align*}
 (R-R_0)|E|
 &\le\int_E|X^\epsilon(t,a)-a|\dd a
 \le\int_0^t\int_{X^\epsilon(s,E)}|u^\epsilon(s,x)|\dd x\dd s \le C T U |E|^{1/4}.
\end{align*}
Here $X^\epsilon(s,E)=\{X^\epsilon(s,a):a\in E\}$; it has measure $|E|$
because the flow is measure preserving. We used the duality $(L^{\frac{4}{3},2})' = L^{4,2}$
and the fact that for any measurable set
\[
\|\one_A\|_{L^{p,q}} = | A|^{\frac{1}{p}}
\]
holds for $1<p<\infty$, $1\le q\le\infty$. Thus $|E|^{3/4}\le CTU/D$.  Since
$\|\one_E\|_X\simeq|E|^{3/4}$, the remaining piece has norm at most
$CMTU/(R-R_0)$.  We have proved
\begin{equation}\label{eq:tightness-quantitative}
 \sup_{0<\epsilon_1,\epsilon_2\le1}\sup_{0\le t\le T}
 \|\theta^\epsilon(t)\one_{\{|x|>R\}}\|_X
 \le \alpha(R_0)+2\tau_M(\theta_0)+\frac{CMTU}{R-R_0}.
\end{equation}
We first choose $M$ large, then $R_0$ large, and finally $R$ large.  This proves
\eqref{eq:whole-space-tightness}.
\end{proof}

The preceding estimate also controls the far field.  If a
bilinear kernel is bounded by $C|x-y|^{-1}$, the contribution in which either
variable lies outside $B_R$ is at most
\[
 C\|\theta\one_{B_R^c}\|_{L^{4/3,2}}
  \|\theta\|_{L^{4/3,2}}
\]
by Proposition~\ref{prop:critical-lorentz}.  Thus both the transport form and
the Hamiltonian can be localized to $B_R\times B_R$ before the diagonal is
removed.

\subsection{The regularized Hamiltonian identity}

For $f\in X$, set
\[
  \mathcal H_{\epsilon_1}(f)
  :=\frac12\int_\Om f\Lam_{\epsilon_1}^{-1}f\dd x.
\]
This is well defined by Proposition~\ref{prop:critical-lorentz}.  Moreover,
\[
 \langle f,\Lam_{\epsilon_1}^{-1}g\rangle
 =c_0\int_{\epsilon_1}^{\infty}
 \langle e^{\rho\Delta/2}f,e^{\rho\Delta/2}g\rangle
 \rho^{-1/2}\dd\rho,
\]
so the form is symmetric and nonnegative.  Write
\[
 \theta^\epsilon=\theta^{\epsilon_1,\epsilon_2},
 \qquad
 \psi^\epsilon=\Lam_{\epsilon_1}^{-1}\theta^\epsilon,
 \qquad
 u^\epsilon=\nabla^\perp\psi^\epsilon.
\]
Testing the approximate equation with $\psi^\epsilon$ and using
self-adjointness gives
\[
 \frac{d}{dt}\mathcal H_{\epsilon_1}(\theta^\epsilon)
 =-\int_\Om u^\epsilon\cdot\nabla\theta^\epsilon\,\psi^\epsilon\dd x
 =\int_\Om\theta^\epsilon
 \nabla^\perp\psi^\epsilon\cdot\nabla\psi^\epsilon\dd x=0.
\]
On a bounded domain there is no boundary term because
$u^\epsilon\cdot n=0$ on $\partial\Om$.  On $\R^2$, the same calculation
follows by inserting a cutoff: for fixed
$\epsilon_1,\epsilon_2>0$, one has
$\theta^\epsilon,u^\epsilon\in L^2$ and $\psi^\epsilon\in L^\infty$, so the
cutoff error tends to zero.  Therefore
\begin{equation}\label{eq:regularized-energy}
 \int_\Omega\theta^{\epsilon_1,\epsilon_2}(t)
 \Lam_{\epsilon_1}^{-1}\theta^{\epsilon_1,\epsilon_2}(t)\dd x
 =\int_\Omega e^{\epsilon_2\Delta}\theta_0
 \Lam_{\epsilon_1}^{-1}e^{\epsilon_2\Delta}\theta_0\dd x.
\end{equation}

Finally,
\begin{equation}\label{eq:initial-regularized-energy-convergence}
 \int_\Om e^{\epsilon_2\Delta}\theta_0
 \Lam_{\epsilon_1}^{-1}e^{\epsilon_2\Delta}\theta_0\dd x
 \longrightarrow
 \int_\Om\theta_0\Lam^{-1}\theta_0\dd x
 \qquad (\epsilon_1,\epsilon_2\downarrow0).
\end{equation}
Indeed, $e^{\epsilon_2\Delta}\theta_0\to\theta_0$ in
$\dot H^{-1/2}(\Om)$, while the nonnegative quadratic forms associated with
$\Lam_{\epsilon_1}^{-1}$ are bounded by the $\Lam^{-1}$ form and increase to
it.  This proves \eqref{eq:initial-regularized-energy-convergence}.

\section{The symmetrized weak formulation}\label{sec:symmetrized}

At critical Lorentz regularity the product $u\theta$ need not be defined.
After symmetrization, however, the order-two velocity kernel is paired with a
difference of test-function gradients.  The resulting kernel is bounded by
$C|x-y|^{-1}$ and defines a continuous quadratic form on $L^{4/3,2}$.  We
prove this first in the plane and then on bounded domains.

\subsection{Whole space}

On $\R^2$,
\[
  u(x)=K*\theta(x),
  \qquad
  K(x)=c\frac{x^\perp}{|x|^3}.
\]
We first perform the calculation for smooth $\theta$ with sufficient decay.
The estimate obtained below then defines the quadratic term for general
$L^{4/3,2}$ data by density and continuity.
Let $\varphi\in C_c^\infty(\R^2)$.  Integrating the transport term by parts
and inserting the kernel representation of $u$ gives
\[
  \int_{\R^2}u\cdot\nabla\theta\,\varphi\dd x
  =-
  \iint_{\R^2\times\R^2}
  K(x-y)\cdot\nabla\varphi(x)\theta(x)\theta(y)\dd x\dd y.
\]
We now interchange $x$ and $y$ in the double integral and average the two
representations.  Since $K(y-x)=-K(x-y)$, this produces
\begin{equation}\label{eq:whole-space-symmetrized}
  \int_{\R^2}u\cdot\nabla\theta\,\varphi\dd x
  =-\frac12
  \iint_{\R^2\times\R^2}
  K(x-y)\cdot(\nabla\varphi(x)-\nabla\varphi(y))
  \theta(x)\theta(y)\dd x\dd y.
\end{equation}
Hence the singularity of the symmetrized kernel is only of order one:
\begin{equation}\label{eq:whole-space-nonlinear-bound}
  \left|\int_{\R^2}u\cdot\nabla\theta\,\varphi\dd x\right|
  \lesssim
  \|\nabla^2\varphi\|_{L^\infty}
  \iint_{\R^2\times\R^2}
  \frac{|\theta(x)||\theta(y)|}{|x-y|}\dd x\dd y.
\end{equation}
The remaining double integral is the Riesz-potential quadratic form of
$|\theta|$ and is controlled by Proposition~\ref{prop:critical-lorentz}:
\[
  \iint\frac{|\theta(x)||\theta(y)|}{|x-y|}\dd x\dd y
  \lesssim
  \||D|^{-1/2}|\theta|\|_{L^2}^2
  \lesssim
  \|\theta\|_{L^{4/3,2}}^2.
\]
Therefore
\begin{equation}\label{eq:time-bound-whole}
  \left|\int_{\R^2}u\cdot\nabla\theta\,\varphi\dd x\right|
  \lesssim
  \|\nabla^2\varphi\|_{L^\infty}
  \|\theta\|_{L^{4/3,2}}^2.
\end{equation}
More generally, the same calculation and
Cauchy--Schwarz for Riesz potentials give
\[
\begin{aligned}
  \left|
  \iint_{\R^2\times\R^2}
  K(x-y)\cdot(\nabla\varphi(x)-\nabla\varphi(y))
  f(x)g(y)\dd x\dd y
  \right| \lesssim
  \|\nabla^2\varphi\|_{L^\infty}
  \|f\|_{L^{4/3,2}}\|g\|_{L^{4/3,2}}.
\end{aligned}
\]
Consequently, the right-hand side of
\eqref{eq:whole-space-symmetrized} extends continuously from smooth functions
to $L^{4/3,2}$.  We use this extension as the nonlinear term.

We use the corresponding estimate for the truncated kernel.

\begin{lemma}
\label{lem:regularized-whole-space-kernel}
Let
\[
 H(\rho,z)=\frac1{4\pi\rho}e^{-|z|^2/(4\rho)},\qquad
 G_\epsilon(z)=c_0\int_\epsilon^\infty
 H(\rho,z)\rho^{-1/2}\dd\rho,\qquad
 K_\epsilon=\nabla^\perp G_\epsilon
\]
for $\epsilon\ge0$ and $z\ne0$.  Then $G_\epsilon$ is even,
$K_\epsilon$ is odd, and, uniformly for $\epsilon\ge0$,
\begin{equation}\label{eq:regularized-whole-space-bounds}
 |G_\epsilon(z)|\le \frac{C}{|z|},\qquad
 |K_\epsilon(z)|\le \frac{C}{|z|^2}.
\end{equation}
Consequently, for every $\varphi\in C_c^\infty(\R^2)$,
\begin{equation}\label{eq:regularized-whole-space-paired}
 \left|K_\epsilon(x-y)\cdot
 \bigl(\nabla\varphi(x)-\nabla\varphi(y)\bigr)\right|
 \le \frac{C\|\nabla^2\varphi\|_{L^\infty}}{|x-y|}.
\end{equation}
For every $\eta>0$,
\begin{equation}\label{eq:whole-space-off-diagonal-convergence}
 \sup_{|z|\ge\eta}
 \bigl(|G_\epsilon(z)-G_0(z)|
       +|K_\epsilon(z)-K_0(z)|\bigr)\longrightarrow0
 \qquad(\epsilon\downarrow0).
\end{equation}
\end{lemma}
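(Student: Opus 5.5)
The plan is to compute everything explicitly from the heat kernel by substituting $\rho=|z|^2 s$. Parity is immediate: $H(\rho,\cdot)$ is radial, so $G_\epsilon$ is radial and hence even, and the gradient of an even function is odd, so $K_\epsilon=\nabla^\perp G_\epsilon$ is odd. For the size bounds, positivity of the integrand gives
\[
 0\le G_\epsilon(z)\le G_0(z)=c_0\int_0^\infty\frac{1}{4\pi\rho^{3/2}}e^{-|z|^2/(4\rho)}\dd\rho=\frac{C}{|z|},
\]
where the last equality comes from the substitution $\rho=|z|^2s$. For $K_\epsilon$ we differentiate under the integral. Since $\nabla_z H(\rho,z)=-\frac{z}{2\rho}H(\rho,z)$, this gives
\[
 |K_\epsilon(z)|\le c_0\int_0^\infty\frac{|z|}{2\rho}\,\frac{1}{4\pi\rho^{3/2}}e^{-|z|^2/(4\rho)}\dd\rho=\frac{C}{|z|^2},
\]
again by the same substitution. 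Differentiation under the integral sign is justified for $z$ away from $0$ by dominated convergence, because the integrand decays like $\rho^{-3/2}$ at infinity and like $e^{-c/\rho}$ near zero. These bounds hold uniformly in $\epsilon\ge0$, since we simply drop the restriction $\rho\ge\epsilon$.

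For \eqref{eq:regularized-whole-space-paired}, the mean value theorem gives $|\nabla\varphi(x)-\nabla\varphi(y)|\le\|\nabla^2\varphi\|_{L^\infty}|x-y|$. Multiplying by $|K_\epsilon(x-y)|\le C|x-y|^{-2}$ yields the claim.

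For \eqref{eq:whole-space-off-diagonal-convergence}, the differences are integrals over $(0,\epsilon)$:
\[
 |G_\epsilon(z)-G_0(z)|\le C\int_0^\epsilon\rho^{-3/2}e^{-|z|^2/(4\rho)}\dd\rho,
 \qquad
 |K_\epsilon(z)-K_0(z)|\le C|z|\int_0^\epsilon\rho^{-5/2}e^{-|z|^2/(4\rho)}\dd\rho .
\]
For $|z|\ge\eta$ and $\epsilon$ small, the integrand $\rho^{-a}e^{-|z|^2/(4\rho)}$ is bounded by $\rho^{-a}e^{-\eta^2/(8\rho)}e^{-|z|^2/(8\rho)}$, and $|z|e^{-|z|^2/(8\rho)}\le C\rho^{1/2}$. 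Hence both quantities are at most $C\int_0^\epsilon\rho^{-5/2}e^{-\eta^2/(8\rho)}\dd\rho$, which tends to $0$ as $\epsilon\downarrow0$ uniformly in $|z|\ge\eta$.

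The main point to handle with care is this last uniformity in $z$. In particular, the factor $|z|$ in the $K$-difference must be absorbed into the Gaussian exponential, as done above, rather than left to grow with $|z|$. The remaining steps are routine calculus.
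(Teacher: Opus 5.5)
Your proof is correct and follows essentially the same route as the paper: Gaussian integration for the uniform bounds (you do it exactly via the scaling $\rho=|z|^2s$), the mean value theorem for the paired estimate, and absorbing the factor $|z|$ into a weaker Gaussian $e^{-\eta^2/(8\rho)}$ to get off-diagonal convergence uniform in $|z|\ge\eta$. The only difference is that you work with the explicit heat kernel rather than generic Gaussian upper bounds; this makes the constants explicit but does not change the argument.
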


\begin{proof}
Parity follows from the radial heat kernel.  The Gaussian estimates
\[
 |H(\rho,z)|\le C\rho^{-1}e^{-|z|^2/(C\rho)},
 \qquad
 |\nabla H(\rho,z)|
 \le C|z|\rho^{-2}e^{-|z|^2/(C\rho)}
\]
give
\[
 |G_\epsilon(z)|
 \le C\int_0^\infty \rho^{-3/2}e^{-|z|^2/(C\rho)}\dd\rho
 \le C|z|^{-1}
\]
and
\[
 |K_\epsilon(z)|
 \le C|z|\int_0^\infty
 \rho^{-5/2}e^{-|z|^2/(C\rho)}\dd\rho
 \le C|z|^{-2}.
\]
The mean-value theorem now proves
\eqref{eq:regularized-whole-space-paired}.  Finally,
$G_0-G_\epsilon$ and $K_0-K_\epsilon$ are given by the same integrals over
$0<\rho<\epsilon$.  When $|z|\ge\eta$, their integrands are bounded by an
integrable power of $\rho$ times $e^{-c\eta^2/\rho}$, uniformly in $z$
(polynomial factors in $|z|/\sqrt\rho$ are absorbed into a slightly weaker
Gaussian).  The resulting integrals tend to zero as $\epsilon\downarrow0$,
which proves \eqref{eq:whole-space-off-diagonal-convergence}.
\end{proof}

\subsection{Bounded domains}

We assume throughout this subsection that $\Om\subset\R^2$ is a bounded domain
with $C^5$ boundary, and let $H_D(t,x,y)$ denote its Dirichlet heat kernel.
For $\epsilon\ge0$ we set
\begin{equation}\label{eq:regularized-domain-kernels}
  G_{1,\epsilon}(x,y)
  =c_0\int_\epsilon^\infty
  H_D(t,x,y)t^{-1/2}\dd t,
  \qquad
  K_\epsilon(x,y)=\nabla_x^\perp G_{1,\epsilon}(x,y).
\end{equation}
Self-adjointness of the Dirichlet heat semigroup gives
$H_D(t,x,y)=H_D(t,y,x)$ and hence
$G_{1,\epsilon}(x,y)=G_{1,\epsilon}(y,x)$.  In particular,
$G_{1,0}=G_1$ is the kernel of $\Lam^{-1}$; we write $K=K_0$.
For smooth $\theta$, the velocity therefore has the representation
\[
  u(x)=\int_\Om K(x,y)\theta(y)\dd y.
\]
If $\varphi\in C_c^\infty(\Om)$, incompressibility and tangency of $u$ to
$\partial\Om$ give
\[
  \int_\Om u\cdot\nabla\theta\,\varphi\dd x
  =-
  \iint_{\Om\times\Om}K(x,y)\cdot\nabla\varphi(x)
  \theta(x)\theta(y)\dd x\dd y.
\]
Unlike the whole-space kernel, $K(x,y)$ is not translation invariant and is
not simply antisymmetric under exchange of its arguments.  The correct 
replacement for oddness is obtained by averaging the expression with its
$x$--$y$ transpose, which gives
\begin{equation}\label{eq:domain-symmetrized}
  \int_\Om u\cdot\nabla\theta\,\varphi\dd x
  =-\frac12
  \iint_{\Om\times\Om}
  \left[K(x,y)\cdot\nabla\varphi(x)+K(y,x)\cdot\nabla\varphi(y)\right]
  \theta(x)\theta(y)\dd x\dd y.
\end{equation}

We write
\[
  \delta(x)=\dist(x,\partial\Omega).
\]
Gaussian and gradient estimates (see, for example,
\cite{ConstantinIgnatova2017,ConstantinNguyen2018}) state that, for
$0<t\leq t_0$,
\begin{align}
  0\leq H_D(t,x,y)
  &\leq Ct^{-1}
  \min\left\{1,\frac{\delta(x)}{\sqrt t}\right\}
  \min\left\{1,\frac{\delta(y)}{\sqrt t}\right\}
  \exp\left(-\frac{|x-y|^2}{Ct}\right),
  \label{eq:dirichlet-heat-bound}\\
  |\nabla_xH_D(t,x,y)|+|\nabla_yH_D(t,x,y)|
  &\leq Ct^{-3/2}
  \exp\left(-\frac{|x-y|^2}{Ct}\right).
  \label{eq:dirichlet-heat-gradient-bound}
\end{align}
We also use the short-time estimate
\begin{equation}\label{eq:dirichlet-simultaneous-gradient}
 \left|(\nabla_x+\nabla_y)H_D(t,x,y)\right|
 \leq Ct^{-3/2}
 \exp\left(-\frac{\delta(x)^2}{Kt}\right),
 \qquad 0<t\leq c\delta(x)^2.
\end{equation}
This follows from the pointwise estimates
\cite[(24)--(27)]{ConstantinIgnatova2020}.  By symmetry, the
same estimate holds with $\delta(y)$ in place of $\delta(x)$.
For $t\geq t_0$, the kernel and its first derivatives decay exponentially
by the spectral expansion.  Integrating these estimates in
\eqref{eq:regularized-domain-kernels} gives the global kernel bounds needed
below.  Recall that $\epsilon$ is a heat-time parameter, so its spatial scale
is $\sqrt\epsilon$.

\begin{corollary}
\label{cor:regularized-dirichlet-kernel-bounds}
Let $r=|x-y|$.  Uniformly for $\epsilon\geq0$ and $x\neq y$ in $\Om$,
\begin{align}
 |G_{1,\epsilon}(x,y)|
 &\leq \frac{C_\Om}{r+\sqrt\epsilon},
 \label{eq:regularized-domain-G-bound}\\
 |\nabla_xG_{1,\epsilon}(x,y)|
 +|\nabla_yG_{1,\epsilon}(x,y)|
 &\leq \frac{C_\Om}{(r+\sqrt\epsilon)^2}.
 \label{eq:regularized-domain-gradient-bound}
\end{align}
Moreover, for every $\eta>0$,
\begin{equation}\label{eq:domain-off-diagonal-convergence}
\begin{aligned}
 &\sup_{\substack{x,y\in\Om\\ |x-y|\geq\eta}}
 \bigl(
 |G_{1,\epsilon}(x,y)-G_1(x,y)|
 +|\nabla_x(G_{1,\epsilon}-G_1)(x,y)|\\
 &\hspace{46mm}
 +|\nabla_y(G_{1,\epsilon}-G_1)(x,y)|
 \bigr)
 \longrightarrow0
\end{aligned}
\end{equation}
as $\epsilon\downarrow0$.
For every $\eta>0$ and $\varphi\in W^{2,\infty}(\Om)$, one also has
\begin{equation}\label{eq:domain-kernel-bound-interior}
 \left|K_\epsilon(x,y)\cdot\nabla\varphi(x)
 +K_\epsilon(y,x)\cdot\nabla\varphi(y)\right|
 \leq
 \frac{C_{\Om,\eta}\|\varphi\|_{W^{2,\infty}(\Om)}}{|x-y|}
\end{equation}
whenever $x\ne y$ and $\delta(x)+\delta(y)>\eta$, uniformly in
$\epsilon\geq0$.
\end{corollary}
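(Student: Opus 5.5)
We integrate the heat-kernel bounds \eqref{eq:dirichlet-heat-bound}--\eqref{eq:dirichlet-simultaneous-gradient} in the time variable of \eqref{eq:regularized-domain-kernels}, splitting at $t_0$. For $t\ge t_0$, the spectral expansion gives $|H_D|+|\nabla_xH_D|+|\nabla_yH_D|\le Ce^{-\lambda_1 t}$. Hence that part of $G_{1,\epsilon}$ and its gradients is bounded, which is harmless because $r+\sqrt\epsilon\lesssim \mathrm{diam}\,\Om+\sqrt{t_0}$ on the relevant range. If $\epsilon\ge t_0$, only this part is present and the bounds are trivial. For $\epsilon<t_0$, the Gaussian bound gives
\[
 |G_{1,\epsilon}|\le C\int_\epsilon^{t_0}t^{-3/2}e^{-r^2/(Ct)}\,dt .
\]
We bound this integral by $C\min\{\epsilon^{-1/2},r^{-1}\}\simeq C(r+\sqrt\epsilon)^{-1}$. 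The first bound comes from dropping the exponential; the second comes from the substitution $t=r^2s$. Similarly, \eqref{eq:dirichlet-heat-gradient-bound} gives the integrand $t^{-2}e^{-r^2/(Ct)}$, which integrates to $\lesssim\min\{\epsilon^{-1},r^{-2}\}$. Differentiation under the integral sign is justified by this domination. The off-diagonal convergence \eqref{eq:domain-off-diagonal-convergence} follows in the same way as Lemma~\ref{lem:regularized-whole-space-kernel}. The differences are integrals over $0<t<\epsilon$ of $t^{-a}e^{-\eta^2/(Ct)}$, which are $\le C_\eta\epsilon\to0$ uniformly for $|x-y|\ge\eta$.

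The main step is \eqref{eq:domain-kernel-bound-interior}. We may assume $\delta(x)\ge\eta/2$. First consider $r=|x-y|\ge \eta/8$, or more precisely $r\ge c_1\eta$ for a small $c_1$. Then \eqref{eq:regularized-domain-gradient-bound} gives each term bounded by $C\eta^{-2}\|\nabla\varphi\|_\infty\le C_\eta\|\varphi\|_{W^{2,\infty}}/r$, using $r\lesssim\mathrm{diam}\,\Om$.

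Next suppose $r<c_1\eta$, so that both $\delta(x),\delta(y)\ge\eta/4$. We rewrite the paired kernel as
\[
 K_\epsilon(x,y)\cdot\nabla\varphi(x)+K_\epsilon(y,x)\cdot\nabla\varphi(y)
 =K_\epsilon(x,y)\cdot(\nabla\varphi(x)-\nabla\varphi(y))
 +\bigl(K_\epsilon(x,y)+K_\epsilon(y,x)\bigr)\cdot\nabla\varphi(y).
\]
The first term is at most $Cr^{-2}\cdot r\|\nabla^2\varphi\|_\infty$ by \eqref{eq:regularized-domain-gradient-bound} and the mean value theorem, since the segment between $x$ and $y$ lies in $\Om$. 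For the second term we note that
\[
 K_\epsilon(x,y)+K_\epsilon(y,x)=\nabla^\perp_xG_{1,\epsilon}(x,y)+\nabla^\perp_yG_{1,\epsilon}(x,y)
\]
by symmetry of $G_{1,\epsilon}$. This equals
\[
 c_0\int_\epsilon^\infty t^{-1/2}(\nabla_x+\nabla_y)^\perp H_D\,dt .
\]
On $t\le c\delta(x)^2$ we use \eqref{eq:dirichlet-simultaneous-gradient}. The integrand is then bounded by $t^{-2}e^{-\delta(x)^2/(Kt)}$, which integrates to $\le C\delta(x)^{-2}\le C_\eta$. On $c\delta(x)^2\le t\le t_0$ we use \eqref{eq:dirichlet-heat-gradient-bound} without the Gaussian, giving $\int t^{-2}\,dt\le C\delta(x)^{-2}\le C_\eta$. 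The range $t\ge t_0$ is bounded as before. Hence the second term is $\le C_\eta\|\nabla\varphi\|_\infty\le C_\eta\|\varphi\|_{W^{2,\infty}}/r$, again using boundedness of $r$.

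The expected obstacle is that the symmetric part $K(x,y)+K(y,x)$ does not vanish, as it does in the plane. Its boundedness away from the boundary hinges entirely on the simultaneous-gradient estimate \eqref{eq:dirichlet-simultaneous-gradient}, namely that $(\nabla_x+\nabla_y)H_D$ feels only the boundary, and on $\delta(x)\gtrsim\eta$ uniformly along the near-diagonal. Everything else is routine time integration, and uniformity in $\epsilon\ge0$ is automatic since all bounds come from integrals over $[\epsilon,\infty)\subset(0,\infty)$ of nonnegative dominating functions.
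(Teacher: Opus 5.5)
Your proposal is correct and takes essentially the same route as the paper. You integrate the Gaussian and gradient heat bounds in time with a split at $t_0$, and you get off-diagonal convergence from integrals over $0<t<\epsilon$. For the interior bound you use the same splitting into $K_\epsilon(x,y)\cdot(\nabla\varphi(x)-\nabla\varphi(y))$ plus $(\nabla_x+\nabla_y)^\perp G_{1,\epsilon}(x,y)\cdot\nabla\varphi(y)$, and you control the latter by \eqref{eq:dirichlet-simultaneous-gradient} at short times. One small precision: for $\epsilon\ge t_0$ it is the decay factor $e^{-\lambda_1\epsilon}$, not mere boundedness, that yields the $(r+\sqrt\epsilon)^{-1}$ and $(r+\sqrt\epsilon)^{-2}$ bounds.
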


\begin{proof}
Gaussian integration gives
\begin{align*}
 \int_\epsilon^{t_0}t^{-3/2}
       e^{-r^2/(Ct)}\dd t
 &\leq \frac{C}{r+\sqrt\epsilon},\\
 \int_\epsilon^{t_0}t^{-2}
       e^{-r^2/(Ct)}\dd t
 &\leq \frac{C}{(r+\sqrt\epsilon)^2}.
\end{align*}
The large-time parts are smooth and exponentially decreasing.  Integrating
\eqref{eq:dirichlet-heat-bound} and
\eqref{eq:dirichlet-heat-gradient-bound} proves
\eqref{eq:regularized-domain-G-bound} and
\eqref{eq:regularized-domain-gradient-bound}.  Each difference in
\eqref{eq:domain-off-diagonal-convergence} is an integral over
$0<t<\epsilon$.  On $r\geq\eta$, its integrand is bounded by a power of
$t^{-1}$ times $\exp(-c\eta^2/t)$, so uniform dominated convergence proves
the convergence assertion.

It remains to prove \eqref{eq:domain-kernel-bound-interior}.  We set
$r=|x-y|$.  If $r\geq\eta/4$, then
\eqref{eq:regularized-domain-gradient-bound} gives
\[
 \left|K_\epsilon(x,y)\cdot\nabla\varphi(x)
 +K_\epsilon(y,x)\cdot\nabla\varphi(y)\right|
 \leq \frac{C_{\Om,\eta}}r\|\nabla\varphi\|_{L^\infty}.
\]
Suppose now that $r<\eta/4$.  Since the distance function is
$1$-Lipschitz,
\[
 \min\{\delta(x),\delta(y)\}
 =\frac{\delta(x)+\delta(y)-|\delta(x)-\delta(y)|}{2}
 >\frac{\eta-r}{2}>\frac{3\eta}{8}.
\]
The segment joining $x$ and $y$ lies in $\Om$, and hence
\[
 |\nabla\varphi(x)-\nabla\varphi(y)|
 \leq r\|\nabla^2\varphi\|_{L^\infty}.
\]
Using the symmetry of $H_D$, we write the expression on the left of
\eqref{eq:domain-kernel-bound-interior} as
\begin{align*}
 c_0\int_\epsilon^\infty
 &\left\{\nabla_x^\perp H_D(t,x,y)\cdot
       \bigl(\nabla\varphi(x)-\nabla\varphi(y)\bigr)\right.\\
 &\left.\qquad
 +(\nabla_x^\perp+\nabla_y^\perp)H_D(t,x,y)
       \cdot\nabla\varphi(y)\right\}t^{-1/2}\dd t.
\end{align*}
The first term is bounded by $C r^{-1}\|\nabla^2\varphi\|_{L^\infty}$
using \eqref{eq:dirichlet-heat-gradient-bound}.  For the second term,
\eqref{eq:dirichlet-simultaneous-gradient} controls the short-time integral,
while the ordinary gradient bound and the spectral decay control the
remaining times.  Since $\delta(x),\delta(y)>3\eta/8$, this gives
\[
 C_{\Om,\eta}\|\nabla\varphi\|_{L^\infty}
 \leq \frac{C_{\Om,\eta}}r\|\nabla\varphi\|_{L^\infty},
\]
after changing the constant, since $r\leq\operatorname{diam}(\Om)$.  This proves
\eqref{eq:domain-kernel-bound-interior}.
\end{proof}

\begin{corollary}
\label{lem:uniform-paired-dirichlet-kernel}
Let $\varphi\in C_c^\infty(\Om)$.  Then, uniformly for $\epsilon\geq0$ and
$x\neq y$ in $\Om$,
\begin{equation}\label{eq:domain-kernel-bound}
 \left|K_\epsilon(x,y)\cdot\nabla\varphi(x)
 +K_\epsilon(y,x)\cdot\nabla\varphi(y)\right|
 \leq \frac{C_{\Om,\varphi}}{|x-y|}.
\end{equation}
\end{corollary}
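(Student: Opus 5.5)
The bound follows from Corollary~\ref{cor:regularized-dirichlet-kernel-bounds} once we split according to the position of $(x,y)$ relative to $\supp\varphi$. Since $\varphi\in C_c^\infty(\Om)$, there is $\eta>0$ with $\delta(z)\geq 2\eta$ for every $z\in\supp\varphi$. The $W^{2,\infty}(\Om)$ norm of $\varphi$ is finite, and all constants below depend only on $\Om$, $\eta$ and this norm, hence only on $\Om$ and $\varphi$. The bounds in Corollary~\ref{cor:regularized-dirichlet-kernel-bounds} are uniform in $\epsilon\geq0$, so the resulting estimate will be as well.

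\emph{Case 1: $\delta(x)+\delta(y)>\eta$.} Then \eqref{eq:domain-kernel-bound-interior} applies directly and gives
\[
\left|K_\epsilon(x,y)\cdot\nabla\varphi(x)+K_\epsilon(y,x)\cdot\nabla\varphi(y)\right|\leq \frac{C_{\Om,\eta}\|\varphi\|_{W^{2,\infty}}}{|x-y|}.
\]

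\emph{Case 2: $\delta(x)+\delta(y)\leq\eta$.} Then $\delta(x)\leq\eta$ and $\delta(y)\leq\eta$, so neither point lies in $\supp\varphi$. Hence $\nabla\varphi(x)=\nabla\varphi(y)=0$, the expression vanishes identically, and the bound holds trivially.

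These two cases are exhaustive, which proves \eqref{eq:domain-kernel-bound}. The only point needing care is the choice of $\eta$. The threshold $\eta$ in \eqref{eq:domain-kernel-bound-interior} must be small enough that the complementary region $\{\delta(x)+\delta(y)\le\eta\}$ lies entirely outside $\supp\varphi$. Taking $\eta$ equal to half the distance from $\supp\varphi$ to $\partial\Om$ does this.
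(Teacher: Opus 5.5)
Your proposal is correct and matches the paper's proof: the split at $\delta(x)+\delta(y)=\eta$, with $\eta$ half the distance from the support to $\partial\Om$, makes the expression vanish in the boundary region and reduces the remaining case to \eqref{eq:domain-kernel-bound-interior}. The paper uses $\supp\nabla\varphi$ in place of $\supp\varphi$ and disposes of $\nabla\varphi\equiv0$ separately; you should likewise exclude the trivial case $\varphi\equiv0$, where your $\eta$ would be infinite.
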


\begin{proof}
If $\nabla\varphi\equiv0$, the assertion is immediate.  Otherwise set
\[
 d_\varphi=\dist(\supp\nabla\varphi,\partial\Om)>0,
 \qquad \eta_\varphi=\frac{d_\varphi}{2}.
\]
If $\delta(x)+\delta(y)\leq\eta_\varphi$, then
$\nabla\varphi(x)=\nabla\varphi(y)=0$, so the left-hand side vanishes.  If
$\delta(x)+\delta(y)>\eta_\varphi$, apply
Corollary~\ref{cor:regularized-dirichlet-kernel-bounds}.  This proves
\eqref{eq:domain-kernel-bound}, with
$C_{\Om,\varphi}=C_{\Om,\eta_\varphi}
\|\varphi\|_{W^{2,\infty}}$.
\end{proof}

Taking $\epsilon=0$ in
Corollary~\ref{lem:uniform-paired-dirichlet-kernel}
proves the asserted bound for \eqref{eq:domain-symmetrized}.  Consequently,
\begin{equation}\label{eq:time-bound-domain}
  \left|\int_\Om u\cdot\nabla\theta\,\varphi\dd x\right|
  \lesssim_{\Om,\varphi}
  \|\theta\|_{L^{4/3,2}}^2.
\end{equation}
Thus both geometries give the same critical estimate for each fixed compactly
supported test function.  For $\epsilon\geq0$, define
\[
 A_{\varphi,\epsilon}(x,y)
 :=\frac12
 \begin{cases}
 K_\epsilon(x-y)\cdot
 \bigl(\nabla\varphi(x)-\nabla\varphi(y)\bigr),
 &\Om=\R^2,\\[1mm]
 K_\epsilon(x,y)\cdot\nabla\varphi(x)
 +K_\epsilon(y,x)\cdot\nabla\varphi(y),
 &\Om\text{ bounded},
 \end{cases}
\]
and let
\begin{equation}\label{eq:regularized-symmetrized-form}
 B_{\varphi,\epsilon}(f,g)
 :=\iint_{\Om\times\Om}
 A_{\varphi,\epsilon}(x,y)f(x)g(y)\dd x\dd y.
\end{equation}
The kernel $A_{\varphi,\epsilon}$ is symmetric in $(x,y)$, and the estimates
proved above give, uniformly for $0\leq\epsilon\leq1$,
\begin{equation}\label{eq:A-bound}
 |A_{\varphi,\epsilon}(x,y)|
 \leq \frac{C_{\Om,\varphi}}{|x-y|},
 \qquad x\neq y.
\end{equation}
Moreover, $A_{\varphi,\epsilon}\to A_{\varphi,0}$ uniformly on every set
$\{|x-y|\geq\eta\}$, after the natural spatial localization in the
whole-space case.  Proposition~\ref{prop:critical-lorentz} therefore shows
that $B_{\varphi,\epsilon}$ is a bounded symmetric bilinear form on
$X\times X$, with norm independent of $\epsilon$.

At $\epsilon=0$, \eqref{eq:regularized-symmetrized-form} agrees with the
transport form introduced in \eqref{eq:intro-transport-form}:
\[
  B_{\varphi,0}(\theta,\theta)=\mathcal N_\Om(\theta,\varphi),
  \qquad
  \langle u\cdot\nabla\theta,\varphi\rangle
  =-B_{\varphi,0}(\theta,\theta).
\]
For time-dependent tests these identities are applied at almost every time
and then integrated.  Bounds \eqref{eq:time-bound-whole} and
\eqref{eq:time-bound-domain} ensure the required local integrability.

\section{Compactness and proof of the main theorem}
\label{sec:compactness-proof}

We now pass to the limit in the approximate solutions.  The symmetrized
estimate gives time compactness.  We then split the quadratic kernels into
near- and far-diagonal parts.  The tail estimate controls the near part, and
the same argument applies to the Hamiltonian.

Fix a sequence
\[
  (\epsilon_{1,n},\epsilon_{2,n})\longrightarrow(0,0),
  \qquad
  \theta_n:=\theta^{\epsilon_{1,n},\epsilon_{2,n}}.
\]
All estimates below are uniform on a fixed interval $[0,T]$, and a diagonal
extraction in $T$ is used for $T\to\infty$.

\subsection{Compactness and inherited localization}

From \eqref{eq:time-bound-whole}, \eqref{eq:time-bound-domain}, and the uniform
bound \eqref{eq:uniform-lorentz}, every fixed $\zeta\in C_c^\infty(\Om)$
satisfies
\begin{equation}\label{eq:time-compactness}
 \left|\frac{\dd}{\dd t}
 \langle\theta_n(t),\zeta\rangle\right|
 \le C_\zeta
 \qquad\text{for a.e. }t,
\end{equation}
uniformly in $n$.  Hence the scalar pairings are equibounded and
equi-Lipschitz.  Arzel\`a--Ascoli, applied to a countable dense family of test
functions, and a diagonal extraction give
\begin{equation}\label{eq:compactness-convergences}
 \theta_n\rightharpoonup^\ast\theta
 \quad\text{in }L^\infty(0,T;X),
 \qquad
 \theta_n\longrightarrow\theta
 \quad\text{in }C([0,T];\mathcal D'(\Om)).
\end{equation}
Since $X$ is reflexive, convergence in distributions at a fixed time,
together with the uniform $X$ bound, implies
\[
  \theta_n(t)\rightharpoonup\theta(t)
  \qquad\text{weakly in }X
\]
for every $t\in[0,T]$.  In fact, this convergence is uniform in the weak
topology.  Indeed, $C_c^\infty(\Om)$ is dense in
$X^*=L^{4,2}(\Om)$, and therefore \eqref{eq:compactness-convergences} and the
uniform $X$ bound imply
\begin{equation}\label{eq:weak-X-uniform}
  \sup_{0\leq t\leq T}
  |\langle\theta_n(t)-\theta(t),g\rangle|
  \longrightarrow0
  \qquad\text{for every }g\in X^*.
\end{equation}
Because $e^{\epsilon_{2,n}\Delta}\theta_0\to\theta_0$ in $X$, the
convergence at $t=0$ gives the prescribed initial trace
$\theta(0)=\theta_0$.

Weak lower semicontinuity of the convex high amplitude cutoff modulus and
\eqref{eq:uniform-tail} yield, for every $M\geq0$ and $t\in[0,T]$,
\begin{equation}\label{eq:limit-tail-inheritance}
  \tau_M(\theta(t))
  \leq\liminf_{n\to\infty}\tau_M(\theta_n(t))
  \leq\tau_M(\theta_0).
\end{equation}
This proves \eqref{eq:tail-bound}.  Applying Proposition
\ref{prop:weighted-tail-envelope} with $f=\theta_0$ and $h=\theta(t)$ gives
\eqref{eq:weighted-distribution-bound} with one weight
$\Phi_{\theta_0}$, independent of time.  This step does not require weak lower semicontinuity of 
the weighted functional.

We also record the spatial localization inherited by the limit.  If
$\Om=\R^2$, Lemma~\ref{lem:whole-space-tightness} and weak lower
semicontinuity imply
\begin{equation}\label{eq:limit-spatial-tightness}
 \lim_{R\to\infty}
 \left[
  \sup_n\sup_{0\leq t\leq T}
  \|\theta_n(t)\one_{B_R^c}\|_X
  +\sup_{0\leq t\leq T}
  \|\theta(t)\one_{B_R^c}\|_X
 \right]=0.
\end{equation}
If $\Om$ is bounded, let $E_\rho=\{x\in\Om:\delta(x)<\rho\}$.  For
$f=\theta_n(t)$ or $f=\theta(t)$, the decomposition
$f=T_Mf+R_Mf$ gives
\begin{equation}\label{eq:boundary-layer-tightness}
  \|f\one_{E_\rho}\|_X
  \leq C M|E_\rho|^{3/4}+\tau_M(\theta_0).
\end{equation}
Since $|E_\rho|\to0$, choosing first $M$ and then $\rho$ makes the
right-hand side arbitrarily small, uniformly in $n$ and $t\in[0,T]$.

\subsection{Compactness of the quadratic forms}

We use the following lemma for both the transport term and the Hamiltonian.

\begin{lemma}
\label{lem:critical-quadratic-compactness}
Let $f_n,f\in L^\infty(0,T;X)$ satisfy
\[
  f_n\longrightarrow f\quad\text{in }C([0,T];X_{\rm w}),
  \qquad
  \sup_n\sup_{0\leq t\leq T}\|f_n(t)\|_X
  +\sup_{0\leq t\leq T}\|f(t)\|_X\leq A,
\]
and suppose that, for some function $\omega(M)\downarrow0$,
\[
  \sup_n\sup_{0\leq t\leq T}\tau_M(f_n(t))
  +\sup_{0\leq t\leq T}\tau_M(f(t))\leq\omega(M).
\]
Fix a compact exhaustion $K_1\Subset K_2\Subset\cdots\Subset\Om$ with
$\bigcup_mK_m=\Om$; when $\Om=\R^2$, we take $K_m=\overline{B_m}$.
Assume in addition the spatial localization
\begin{equation}\label{eq:abstract-spatial-localization}
 \lim_{m\to\infty}
 \left[
  \sup_n\sup_{0\leq t\leq T}\|f_n(t)\one_{\Om\setminus K_m}\|_X
  +\sup_{0\leq t\leq T}\|f(t)\one_{\Om\setminus K_m}\|_X
 \right]=0.
\end{equation}

Let $J_n,J$ be symmetric kernels that are jointly continuous in
$(t,x,y)$ away from $x=y$,
such that
\begin{equation}\label{eq:abstract-critical-kernel-bound}
  |J_n(t,x,y)|+|J(t,x,y)|\leq\frac{C}{|x-y|},
\end{equation}
and, for every $K\Subset\Om$ and $\eta>0$,
\[
  J_n\longrightarrow J
  \quad\text{uniformly on }
  [0,T]\times\{(x,y)\in K^2:|x-y|\geq\eta\}.
\]
Then
\begin{equation}\label{eq:abstract-quadratic-convergence}
 \sup_{0\leq t\leq T}
 \left|
  \iint J_n(t,x,y)f_n(t,x)f_n(t,y)\dd x\dd y
  -\iint J(t,x,y)f(t,x)f(t,y)\dd x\dd y
 \right|\longrightarrow0.
\end{equation}
\end{lemma}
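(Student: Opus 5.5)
The plan is to reduce \eqref{eq:abstract-quadratic-convergence} to three error terms, all controlled uniformly in $n$ and $t$: a far-field term, a near-diagonal term, and a regular compact term. The basic tool is the bilinear bound
\[
 \iint\frac{|g(x)||h(y)|}{|x-y|}\dd x\dd y\lesssim\|g\|_X\|h\|_X,
\]
which follows from Proposition~\ref{prop:critical-lorentz} and Cauchy--Schwarz on the Riesz potential form $\langle |D|^{-1/2}|g|,|D|^{-1/2}|h|\rangle$, after extending by zero outside $\Om$.

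\textbf{Step 1 (spatial localization).} Write $f_n=f_n\one_{K_m}+f_n\one_{\Om\setminus K_m}$, and split $f$ the same way. Every term containing a factor $\one_{\Om\setminus K_m}$ is bounded via \eqref{eq:abstract-critical-kernel-bound} and the bilinear bound by $CA\,\sup\|f_n\one_{\Om\setminus K_m}\|_X$ (or the analogous quantity for $f$). By \eqref{eq:abstract-spatial-localization} this is small uniformly once $m$ is large. So it suffices to treat functions supported in the compact set $K=K_m$.

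\textbf{Step 2 (amplitude and diagonal cutoffs).} Decompose $f_n=T_Mf_n+R_Mf_n$ as in Lemma~\ref{lem:tail}. Each term containing $R_M$ is bounded by $CA\,\omega(M)$, since $\|R_Mf_n\|_X=\tau_M(f_n)$. The remaining functions $g_n=T_M(f_n)\one_K$ are bounded by $M$ and supported in $K$. Next fix $\chi_\eta(x-y)$ with $\chi_\eta=1$ for $|x-y|<\eta$ and $\chi_\eta=0$ for $|x-y|\ge2\eta$. The near-diagonal part is bounded by
\[
 M^2\iint_{K\times K,\ |x-y|<2\eta}\frac{C}{|x-y|}\lesssim M^2|K|\eta,
\]
and the same bound holds for the limit. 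We fix $M$ first, then choose $\eta$ small, and the result is uniform in $n$ and $t$.

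\textbf{Step 3 (far-diagonal convergence, the main obstacle).} Here we must show
\[
 \sup_t\Bigl|\iint(1-\chi_\eta)\bigl(J_nf_nf_n-Jff\bigr)\one_K\one_K\Bigr|\to0.
\]
We do this on the original $f_n$ restricted to $K$, not on $T_Mf_n$, because truncation does not commute with weak limits; this is why the $R_M$ split should be applied only inside the near-diagonal region, and the order of the splits arranged accordingly. Replacing $J_n$ by $J$ costs $\|J_n-J\|_{L^\infty}\,\|f_n\one_K\|_{L^1}^2\lesssim\|J_n-J\|_{L^\infty}A^2|K|^{1/2}$, which tends to $0$. The kernel $L_t(x,y)=(1-\chi_\eta)J(t,x,y)\one_K(x)\one_K(y)$ is bounded and jointly continuous in $(t,x,y)$. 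Approximate it uniformly in $t$ by a finite sum $\sum_j a_j(t)b_j(x)b_j'(y)$ with continuous $a_j$ and $b_j,b_j'\in C_c(K)\subset X^*$; this uses Stone--Weierstrass on the compact set $[0,T]\times K\times K$, and the error is controlled by $L^1$ norms as above. On each product term, uniform weak convergence $\langle f_n(t),b\rangle\to\langle f(t),b\rangle$ in $C([0,T])$, which holds as in \eqref{eq:weak-X-uniform}, yields convergence of the products uniformly in $t$.

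The jump $\one_K$ should be handled by instead localizing with smooth cutoffs $\zeta_m\in C_c^\infty$ equal to $1$ on $K_m$ and supported in $K_{m+1}$. With this choice the weak-times-weak product converges because each factor is tested against a fixed continuous compactly supported function. Collecting the three steps and letting $m\to\infty$, $M\to\infty$, $\eta\to0$, and finally $n\to\infty$ gives \eqref{eq:abstract-quadratic-convergence}.
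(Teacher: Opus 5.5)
Your proposal is correct and follows essentially the same route as the paper. You split the kernel into near and far parts at scale $\eta$, apply the amplitude decomposition $T_M/R_M$ only near the diagonal (controlled by $\omega(M)$ and by $\eta$), and in the far region combine spatial localization, uniform off-diagonal convergence of $J_n$, and a tensor-product (compact-kernel) approximation with uniform weak convergence. The differences are minor. You localize to $K_m$ before the near-diagonal estimate and bound the truncated part by $M^2|K|\eta$, whereas the paper keeps that estimate global via $\|\,|h|\wedge M\,\|_{L^2}^2\lesssim M^{2/3}A^{4/3}$ and Young's inequality. Your smooth cutoffs $\zeta_m$ are also unnecessary, since $b\one_K\in L^{4,2}$ for every $b\in C(K)$.
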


\begin{proof}
We choose $\chi\in C_c^\infty([0,\infty))$ with $\chi=1$ on $[0,1]$ and
$\chi=0$ on $[2,\infty)$.  We split the kernels using
$\chi(|x-y|/\eta)$ and write
\begin{align*}
 Q_{L,\eta}^{\rm near}(h)
 &:=\iint \chi\left(\frac{|x-y|}{\eta}\right)
 L(t,x,y)h(x)h(y)\dd x\dd y,\\
 Q_{L,\eta}^{\rm far}(h)
 &:=\iint \left[1-\chi\left(\frac{|x-y|}{\eta}\right)\right]
 L(t,x,y)h(x)h(y)\dd x\dd y.
\end{align*}
We begin with the far part.  If $L$ satisfies
\eqref{eq:abstract-critical-kernel-bound}, then Proposition
\ref{prop:critical-lorentz} gives
\[
 \left|Q_{L,\eta}^{\rm far}(h)
       -Q_{L,\eta}^{\rm far}(h\one_K)\right|
 \leq C\|h\|_X\|h\one_{\Om\setminus K}\|_X.
\]
By \eqref{eq:abstract-spatial-localization}, we may therefore choose
$K=K_m$ so that the contribution from $\Om\setminus K$ is arbitrarily small,
uniformly in $n$ and $t$.

On $K$, the embedding $X\hookrightarrow L^1(K)$ gives a uniform bound for
$\|f_n(t)\|_{L^1(K)}$ and $\|f(t)\|_{L^1(K)}$.  The off-diagonal convergence
of $J_n$ then gives
\[
 \sup_{0\leq t\leq T}
 \left|Q_{J_n,\eta}^{\rm far}(f_n(t)\one_K)
       -Q_{J,\eta}^{\rm far}(f_n(t)\one_K)\right|
 \longrightarrow0.
\]
The far kernel is continuous on $[0,T]\times K\times K$.  The standard
compact-kernel argument, the convergence in $C([0,T];X_{\rm w})$, and the
uniform $L^1(K)$ bound therefore give
\begin{equation}\label{eq:far-field-convergence}
 \sup_{0\leq t\leq T}
\left|Q_{J_n,\eta}^{\rm far}(f_n(t))
       -Q_{J,\eta}^{\rm far}(f(t))\right|\longrightarrow0,
\end{equation}

It remains to control the diagonal uniformly.  For any member $h$ of the
family $\{f_n(t),f(t)\}$, write
\[
  |h|=h_{\leq M}+h_{>M},
  \qquad
  h_{\leq M}=|h|\wedge M,
  \qquad
  h_{>M}=(|h|-M)_+.
\]
The embedding $X\hookrightarrow L^{4/3,\infty}$ 
gives
\[
  \|h_{\leq M}\|_{L^2}^2
  \leq C M^{2/3}A^{4/3}.
\]
Young's inequality and
$\||z|^{-1}\one_{\{|z|\leq2\eta\}}\|_{L^1(\R^2)}\leq C\eta$ therefore
imply
\begin{equation}\label{eq:near-bounded-part}
 \iint_{|x-y|\leq2\eta}
 \frac{h_{\leq M}(x)h_{\leq M}(y)}{|x-y|}\dd x\dd y
 \leq C M^{2/3}A^{4/3}\eta.
\end{equation}
The boundedness $\Lambda^{-\frac{1}{2}}:X\to L^2$ gives
\begin{align}
 \iint\frac{h_{>M}(x)h_{>M}(y)}{|x-y|}\dd x\dd y
 &\leq C\omega(M)^2,
 \label{eq:near-high-part}\\
 \iint\frac{h_{\leq M}(x)h_{>M}(y)}{|x-y|}\dd x\dd y
 &\leq C A\omega(M).
 \label{eq:near-mixed-part}
\end{align}
Combining the three estimates and using
\eqref{eq:abstract-critical-kernel-bound}, we obtain for both the
approximating and limiting near-field forms
\begin{equation}\label{eq:near-small}
 \sup_n\sup_{0\leq t\leq T}|Q_{J_n,\eta}^{\rm near}(f_n(t))|
 +\sup_{0\leq t\leq T}|Q_{J,\eta}^{\rm near}(f(t))|
 \leq C\bigl(M^{2/3}A^{4/3}\eta+A\omega(M)+\omega(M)^2\bigr).
\end{equation}
We first choose $M$ so that the last two terms are small and then choose $\eta$
so that the first term is small.  Together with
\eqref{eq:far-field-convergence}, this proves
\eqref{eq:abstract-quadratic-convergence}.
\end{proof}

We apply the lemma to the transport term.  Let
$\varphi\in C_c^\infty([0,T]\times\Om)$ and set
\[
  J_n(t,x,y)=A_{\varphi(t),\epsilon_{1,n}}(x,y),
  \qquad
  J(t,x,y)=A_{\varphi(t),0}(x,y).
\]
The uniform bound is \eqref{eq:A-bound}; off-diagonal convergence follows
from Lemma~\ref{lem:regularized-whole-space-kernel} on $\R^2$ and from
Corollary~\ref{cor:regularized-dirichlet-kernel-bounds} on a bounded domain.
The high amplitude cutoff  hypothesis follows from \eqref{eq:uniform-tail} and
\eqref{eq:limit-tail-inheritance}, while the localization hypothesis is
\eqref{eq:limit-spatial-tightness} or
\eqref{eq:boundary-layer-tightness}.  Hence
\begin{equation}\label{eq:nonlinear-convergence}
 \sup_{0\leq t\leq T}
 \left|
 B_{\varphi(t),\epsilon_{1,n}}(\theta_n(t),\theta_n(t))
 -B_{\varphi(t),0}(\theta(t),\theta(t))
 \right|\longrightarrow0.
\end{equation}
Passing to the limit in the approximate weak formulation proves
\eqref{eq:weak-solution-definition}.  Thus $\theta$ is a global weak solution
with initial datum $\theta_0$.

\subsection{Compactness of the negative Sobolev Hamiltonian}

Weak convergence alone would give only lower semicontinuity of the
Hamiltonian.  The preceding quadratic-form lemma excludes an energy defect.
Let $\mathscr G_\epsilon$ denote $G_\epsilon(x-y)$ on $\R^2$ and
$G_{1,\epsilon}(x,y)$ on a bounded domain.  In either case,
$\mathscr G_\epsilon$ is symmetric and
\[
  |\mathscr G_\epsilon(x,y)|\leq\frac{C_\Om}{|x-y|}
\]
uniformly for $\epsilon\geq0$.  Moreover,
$\mathscr G_{\epsilon_{1,n}}\to\mathscr G_0$ uniformly off the diagonal on
compact subsets, by Lemma~\ref{lem:regularized-whole-space-kernel} or
Corollary~\ref{cor:regularized-dirichlet-kernel-bounds}.

We apply Lemma~\ref{lem:critical-quadratic-compactness} with
\[
  J_n=\mathscr G_{\epsilon_{1,n}},
  \qquad J=\mathscr G_0,
  \qquad f_n=\theta_n,
  \qquad f=\theta.
\]
All high amplitude cutoff  and localization assumptions were established in the preceding
subsection.  We obtain, for every $T>0$,
\begin{equation}\label{eq:energy-convergence}
 \lim_{n\to\infty}\sup_{0\leq t\leq T}
 \left|
  \int_\Om\theta_n(t)\Lam_{\epsilon_{1,n}}^{-1}\theta_n(t)\dd x
  -\int_\Om\theta(t)\Lam^{-1}\theta(t)\dd x
 \right|=0.
\end{equation}
\subsection{Conservation of the negative Sobolev Hamiltonian}

The regularized energy identity \eqref{eq:regularized-energy} gives
\[
  \int_\Om \theta_n(t)
  \Lam_{\epsilon_{1,n}}^{-1}\theta_n(t)\dd x
  =
  \int_\Om e^{\epsilon_{2,n}\Delta}\theta_0
  \Lam_{\epsilon_{1,n}}^{-1}
  e^{\epsilon_{2,n}\Delta}\theta_0\dd x.
\]
By \eqref{eq:initial-regularized-energy-convergence}, the right-hand side
converges to $\int_\Om\theta_0\Lam^{-1}\theta_0\dd x$.
On the other hand, \eqref{eq:energy-convergence} identifies the limit of the
left-hand side, uniformly for $t\in[0,T]$.  Therefore
\[
  \int_\Om\theta(t)\Lam^{-1}\theta(t)\dd x
  =\int_\Om\theta_0\Lam^{-1}\theta_0\dd x
  \qquad (0\leq t\leq T).
\]
Since $T>0$ is arbitrary, this identity holds for every $t\geq0$.  Equivalently,
\[
  \|\theta(t)\|_{\dot H^{-1/2}(\Om)}
  =\|\theta_0\|_{\dot H^{-1/2}(\Om)},
  \qquad t\geq0.
\]

This completes the proof of Theorem~\ref{thm:main}.

\subsection*{Acknowledgments} P.C was partially supported by NSF grant DMS-2106528
and by a Simons Collaboration Grant 601960. The work of M.I. was partially supported by NSF grant DMS-2204614.
Q.-H.~N. was supported by the CAS Project for Young Scientists in Basic
Research, Grant No.~YSBR-031, and by the NSFC under Grant
Nos.~1251101538 and 12595282.

\appendix

\section{Optimality of the secondary Lorentz exponent}\label{app:sharpness}

We show that the secondary Lorentz index in
Proposition~\ref{prop:critical-lorentz} cannot be increased while the principal
exponent remains $4/3$.  The examples are radial, nonnegative, and supported
near the origin.  We first consider $q=\infty$ and then add logarithmic factors
for $2<q<\infty$.

\begin{proof}[Proof of the sharpness statement in
Proposition~\ref{prop:critical-lorentz}]
For $q=\infty$, take
\[
  f(x)=\one_{\{|x|<1/100\}}|x|^{-3/2}.
\]
Then $f\in L^{4/3,\infty}(\R^2)$, whereas the annulus estimate used below
gives $I_{1/2}f(x)\gtrsim |x|^{-1}$ near the origin, and hence
$I_{1/2}f\notin L^2$.  It remains to treat $2<q<\infty$.
	
\begingroup
We fix $2<q<\infty$ and choose $a>1/q$.  We write
$L(r)=\log(1/r)$ and choose $r_0\in(0,e^{-e})$ sufficiently small that the
radial function
\[
 f(x)=\one_{\{|x|<r_0\}}|x|^{-3/2}
       L(|x|)^{-1/q}\bigl(\log L(|x|)\bigr)^{-a}
\]
is radially decreasing on its support.  For a radial decreasing function in
two dimensions, the rearrangement formula for Lorentz norms gives
\begin{align*}
 \|f\|_{L^{4/3,q}}^q
 &\simeq
 \int_0^{r_0}\bigl[r^{3/2}f(r)\bigr]^q\frac{\dd r}{r}=\int_{L(r_0)}^\infty
   \frac{\dd L}{L(\log L)^{aq}}<\infty,
\end{align*}
because $aq>1$.  Thus $f\in L^{4/3,q}(\R^2)$.

We next show that $I_{1/2}f\notin L^2$.  Let $|x|=r<r_0/4$ and restrict the
potential integral to $2r<|y|<4r$.  On this annulus,
$|x-y|\simeq r$, and the explicit radial profile gives
\[
 f(y)\simeq r^{-3/2}L(r)^{-1/q}(\log L(r))^{-a}.
\]
Since the annulus has area comparable to $r^2$ and $f\geq0$, it follows that
\[
 I_{1/2}f(x)
 =c_2\int_{\R^2}\frac{f(y)}{|x-y|^{3/2}}\dd y
 \gtrsim r^{-1}L(r)^{-1/q}(\log L(r))^{-a}.
\]
Consequently,
\begin{align*}
 \|I_{1/2}f\|_{L^2}^2
 &\gtrsim
 \int_0^{r_0/4}
 \frac{\dd r}{rL(r)^{2/q}(\log L(r))^{2a}}=\int_{L(r_0/4)}^\infty
 \frac{\dd L}{L^{2/q}(\log L)^{2a}}
 =\infty,
\end{align*}
because $2/q<1$.  Hence $f\in L^{4/3,q}(\R^2)$ but
$|D|^{-1/2}f\notin L^2(\R^2)$, and the asserted estimate cannot hold for any
$q>2$.
\endgroup
\end{proof}

\end{document}